\long\def\symbolfootnote[#1]#2{\begingroup%
\def\thefootnote{\fnsymbol{footnote}}\footnote[#1]{#2}\endgroup}
\newcommand{\tr}{\ensuremath{{}^t\!}}
\newcommand{\C}{\mathfrak C}
\newcommand{\Fq}{\mathbb F_q}
\def\imod#1{\allowbreak\mkern10mu({\operator@font mod}\,\,#1)}
\newtheorem{theorem}{Theorem}[section]
\newtheorem{lemma}[theorem]{Lemma}
\newtheorem{proposition}[theorem]{Proposition}
\newtheorem*{theorem*}{Theorem}
\theoremstyle{definition}
\newtheorem{definition}[theorem]{Definition}
\newtheorem{remark}[theorem]{Remark}
\newtheorem{example}[theorem]{Example}
\numberwithin{equation}{section}
\newcommand{\ignore}[1]{}
\newcommand{\mynote}[1]{}
\begin{document}
\setcounter{section}{0}
\title{Asymptotics of commuting probabilities in reductive algebraic groups}
\author{Shripad M. Garge}
\address{Department of Mathematics, Indian Institute of Technology, Powai, Mumbai, 400 076 India.}
\email{shripad@math.iitb.ac.in}
\email{smgarge@gmail.com}
\author{Uday Bhaskar Sharma}
\address{Tata Institute of Fundamental Research, Dr. Homi Bhabha Road, Navy Nagar, Colaba, Mumbai 
- 400005, India}
\email{udaybsharmaster@gmail.com}
\author{Anupam Singh}
\address{IISER Pune, Dr. Homi Bhabha Road, Pashan, Pune 411008 India.} 
\email{anupamk18@gmail.com}
\thanks{The third named author acknowledges support of SERB research grant CRG/2019/000271 towards this work.}
\subjclass[2010]{20G15, 20D06}
\keywords{commuting probability, $z$-classes of tuples, algebraic groups}


\begin{abstract}
Let $G$ be an algebraic group. For $d\geq 1$, we define the commuting probabilities $cp_d(G) = 
\frac{\dim(\C_d(G))}{\dim(G^d)}$, where $\C_d(G)$ is the variety of 
commuting $d$-tuples in $G$. We prove that for a reductive group $G$ when $d$ is large, 
$cp_d(G)\sim \frac{\alpha}{n}$ where $n=\dim(G)$, and $\alpha$ is the maximal dimension of 
an Abelian subgroup of $G$. For a finite reductive group $G$ defined over the field $\mathbb F_q$, 
we 
show that $cp_{d+1}(G(\mathbb F_q))\sim q^{(\alpha-n)d}$, and give several examples.
\end{abstract}
\maketitle
\section{Introduction}
Commuting probability, also referred as commuting degree, is the probability of finding a 
commuting tuple in a group. The question of determining this for a given group is well studied for 
finite groups and compact groups (see for example~\cite{BFM,ET,FF,GR,HR}). To an interested reader 
we recommend the survey article~\cite{SS2} and the references therein for further reading. Let $G$ 
be a group and $d\geq 1$. Let $\C_d(G) = \{(g_1, \ldots, g_d) \in G^d \mid g_ig_j=g_jg_i, \forall 
1\leq i, j \leq d\}$ (also denoted as $G^{(d)}$ sometimes). The elements of $\C_d(G)$ are called 
commuting $d$-tuples. The commuting probability for a finite group $G$, for $d\geq 1$, is defined 
as 
$$cp_d(G)=\frac{|\C_d(G)|}{|G|^d}.$$
Since $\C_1(G)=G$, and $cp_1(G)=1$, we usually take $d\geq 2$. The commuting probability 
$cp_d(G)$ measures the probability of finding a $d$-tuple of elements of $G$ which commute 
pairwise (we will simply call it a $d$-tuple whereas we mean commuting $d$-tuple). While studying 
the 
commuting probabilities for compact groups, instead of size, one considers the measure of 
the sets involved. In this article, we would like to study the asymptotic value of commuting 
probabilities for algebraic groups and finite groups of Lie type. The notion of commuting 
probability in algebraic groups is introduced by the first-named author in~\cite{Ga} where 
$cp_2$ is defined using the dimension of the subsets involved. We generalise that to define $cp_d$ 
here. 

Let $K$ be an algebraically closed field. Let $G$ be an algebraic group over $K$. The set 
$\C_d(G)$ is an algebraic variety, often called commuting variety in the literature. For $d\geq 
1$, we define the commuting probabilities as follows,
$$cp_d(G)=\frac{\dim(\C_d(G))}{\dim(G^d)} = \frac{\dim(\C_d(G))}{d.\dim(G)}. $$
Clearly, $cp_1(G)=1$, thus, in what follows we take $d\geq 2$. The questions such as if
$\C_d(G)$ is an irreducible variety, is an intense topic of study. 
Richardson~\cite[Theorem C]{Ri} proved that $\C_2(G)$ is an irreducible variety when $G$ is a 
simply connected semisimple algebraic group. The commuting varieties for matrices and Lie algebras 
are well studied (see, for example~\cite{FG,GuSe}). However, our concern here is its dimension. 
In Section~\ref{section-dim-ct}, we get a bound on this using the idea of the branching matrix 
developed in Section~\ref{section-BMalg-gps}. 

Recall that we say $\{a_n\}$ is asymptotic to $\{b_n\}$, as $n$ gets large, if $\displaystyle 
\lim_{n \to\infty}\frac{a_n}{b_n}=1$, and we write $a_n\sim b_n$. Let $G$ be a reductive algebraic 
group of dimension $n$, and maximal dimension of an Abelian subgroup is $\alpha$. In 
Section~\ref{section-alg-gps}, we prove that for a reductive algebraic group $G$, when $d$ gets 
large, $cp_d(G)\sim \frac{\alpha}{n}$ (see Theorem~\ref{theorem-cpd-alg-gps}). In 
Section~\ref{section-fg}, using~\cite[Theorem 3.1]{KPP} where simultaneous conjugacy classes are 
studied for finite groups, we note that for a finite group $G$ the 
commuting probabilities $cp_d(G)\sim m\left(\frac{a}{|G|}\right)^{d-1}$, where $m$ is a constant, 
and $a$ is the maximal size of an Abelian subgroup of $G$. We give an alternate proof of this 
result 
using the ideas developed in this paper. We apply this result on a finite reductive group $G$ 
defined over the field $\mathbb F_q$ to get $cp_d(G(\mathbb F_q)) \sim 
\left(\frac{1}{q^{n-\alpha}}\right)^{d-1}$ up to a constant (see Theorem~\ref{CP-fgl}). The maximal 
Abelian subgroups are known for finite groups of Lie type (see, for 
example~\cite{Vd1,Vd2,Wo1,Wo2,Ba}). For several examples of classical groups, we use this to compute 
the asymptotic value of commuting probabilities, and find tuples of which common centralizer is a 
maximal Abelian subgroup.


\section{Branching matrix for algebraic groups}\label{section-BMalg-gps}
Let $K$ be an algebraically closed field, and $G$ be an algebraic group over $K$. To study 
the commuting probabilities $cp_d(G)$, we introduce branching matrix $B_G$ for $G$. This concept 
will be generalized from that of finite groups given in~\cite{SS, SS2}. The size of branching 
matrix $B_G$ will turn out to be the number of $z$-classes of commuting tuples, and entries of 
$B_G$ will be a measure of different conjugacy classes, which are in the same $z$-class. In this 
section, we define these notions for algebraic groups and prove the relation between $B_G$ and 
the commuting probabilities. 

\subsection{$z$-classes of tuples}
This notion is a generalization of the similar concept studied for finite groups and algebraic 
groups (see for example~\cite{BS,GS}). We define an equivalence relation, namely $z$-equivalence, 
on the set of 
commuting $d$-tuples $\C_d(G)$, for $d\geq 1$, as follows. 
\begin{definition}
The tuples $(g_1,\ldots, g_d)$ and $(h_1,\ldots, h_d)\in \C_d(G)$ are said to be $z$-equivalent if 
$\mathcal Z_G(g_1,\ldots, g_d)$ and $\mathcal Z_G(h_1,\ldots, h_d)$ are conjugate in $G$, where 
$\displaystyle \mathcal Z_G(g_1,\ldots, g_d) = \bigcap_{i=1}^d \mathcal Z_G(g_i)$ denotes the 
intersection of centralizers of $g_1,\ldots, g_d$ in $G$. We call the corresponding equivalence 
classes the {\bf $z$-classes of $d$-tuples}. 
\end{definition}
\noindent Notice that, we can make $G$ act on $\C_d(G)$ by conjugation component wise, thus giving 
rise to the conjugacy classes of $d$-tuples. Hence, a $z$-class of $d$-tuple is a union of those 
conjugacy classes of $d$-tuples for which the corresponding common centralizers are conjugate 
within $G$. For $d=1$, this definition coincides with the usual notion of {\bf $z$-classes} in 
$G$, thus $z$-classes of $1$-tuples are simply the $z$-classes. The number of $z$-classes is known 
to be finite for a reductive algebraic group. This was proved by Steinberg (see Section 3.6 
Corollary 1 to Theorem 2~\cite{St}), and is further explored over fields of type $(F)$ 
in~\cite{GS}. 
However, this number could be infinite for a more general algebraic group, for example upper 
triangular matrix 
group (see~\cite[Theorem 1.2]{Bh}). For more on $z$ classes, we refer an interested reader to the 
survey article~\cite{BS}. The notion of $z$-classes can be defined among all tuples.
\begin{definition}
We define $z$-equivalence on $\C(G)=\displaystyle\bigcup_{d\geq 1} \C_d(G)$ as follows. The tuples 
$(g_1,\ldots, g_e)\in \C_e(G)$ and $(h_1,\ldots, h_f)\in \C_f(G)$ are said to be {\bf 
$z$-equivalent} if $\mathcal Z_G(g_1,\ldots, g_e)$ and $\mathcal Z_G(h_1,\ldots, h_f)$ are 
conjugate in $G$. We call the equivalence classes in $\C(G)$, the {\bf $z$-classes of tuples}. 
\end{definition}
\noindent The number of $z$-classes of tuples is obviously finite when $G$ is a finite group, but 
its finiteness for a reductive algebraic group requires some work. We begin with the following,
\begin{proposition}\label{finite-cdg}
Let $G$ be a reductive algebraic group. Then, there are finitely many $z$-classes of $d$-tuples 
(i.e., $z$-classes in $\C_d(G)$), for any $d\geq 1$. 
\end{proposition}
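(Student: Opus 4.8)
The plan is to reduce the statement to the known finiteness of ordinary $z$-classes (the $d=1$ case, due to Steinberg) by an induction on $d$, with the inductive step controlled by the fact that a centralizer of a semisimple–unipotent decomposition in a reductive group is again reductive. First I would recall that for any $g \in G$, the centralizer $\mathcal Z_G(g)$, while not always connected, has identity component $\mathcal Z_G(g)^\circ$ which is reductive when $g$ is semisimple, and in general for arbitrary $g = g_s g_u$ (Jordan decomposition) one has $\mathcal Z_G(g) = \mathcal Z_G(g_s) \cap \mathcal Z_G(g_u)$, with $\mathcal Z_G(g_s)^\circ$ reductive; more to the point, for a commuting tuple the common centralizer $\mathcal Z_G(g_1,\dots,g_d)$ is built by successively intersecting, and at each stage one lands inside a (possibly disconnected) group whose identity component, after passing to the semisimple parts, is reductive. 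So the key structural input is: \emph{if $H$ is a reductive group and $g \in H$, then $\mathcal Z_H(g)$ has finitely many $H$-conjugacy classes of $z$-classes, and the relevant centralizers arising as $\mathcal Z_H(g)$ again have reductive identity component}, so the induction can proceed.

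The main steps, in order, would be as follows. (1) Show it suffices to prove: for a reductive $G$, the set of subgroups of the form $\mathcal Z_G(g_1,\dots,g_d)$, up to $G$-conjugacy, is finite. (2) Induct on $d$. For $d=1$ this is exactly Steinberg's theorem cited in the excerpt (finitely many $z$-classes in $G$). (3) For the inductive step, write $\mathcal Z_G(g_1,\dots,g_d) = \mathcal Z_G(g_1,\dots,g_{d-1}) \cap \mathcal Z_G(g_d)$. By induction there are finitely many choices of $H := \mathcal Z_G(g_1,\dots,g_{d-1})$ up to conjugacy; fix one. Now $g_d$ centralizes $g_1,\dots,g_{d-1}$, hence $g_d \in H$, and $\mathcal Z_G(g_1,\dots,g_d) = \mathcal Z_H(g_d)$. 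The issue is that $H$ need not be connected or reductive, so I would replace $H$ by $H^\circ$ (finite index, so only finitely many cosets to track) and handle the non-reductive part: $H^\circ$ has a Levi decomposition $H^\circ = R_u(H^\circ) \rtimes L$ with $L$ reductive, and one analyzes $\mathcal Z_{H^\circ}(g_d)$ via the image of $g_d$ in $L$ together with the unipotent radical contribution. (4) Conclude that, up to $H$-conjugacy (hence up to $G$-conjugacy), there are finitely many possibilities for $\mathcal Z_H(g_d)$, and sum over the finitely many $H$'s.

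The hard part will be step (3): controlling centralizers inside the centralizer $H = \mathcal Z_G(g_1,\dots,g_{d-1})$, which is in general neither connected nor reductive. Two obstacles intertwine — the component group $H/H^\circ$ (handled by finiteness of the component group for reductive $G$ and a Noetherian/finiteness argument on its action), and the unipotent radical of $H^\circ$ when some $g_i$ are not semisimple. A clean way around both is to first reduce to the semisimple case: replace each $g_i$ by its semisimple part $g_{i,s}$ and observe $\mathcal Z_G(g_1,\dots,g_d) \subseteq \mathcal Z_G(g_{1,s},\dots,g_{d,s})$, with the latter a reductive-identity-component group inside which the unipotent parts $g_{i,u}$ live as commuting unipotent elements — and finiteness of conjugacy classes of centralizers of commuting unipotent tuples in a reductive group can be extracted from the finiteness of unipotent classes plus Steinberg's argument applied iteratively. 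Assembling these two reductions (to semisimple tuples, then handling the unipotent correction) is where the real work lies; everything else is bookkeeping over finitely many cases.
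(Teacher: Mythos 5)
Your final paragraph arrives at essentially the paper's argument, but your earlier steps (1)--(4) are a detour the paper avoids. The paper goes straight to the Jordan-decomposition reduction: since each $s_i,u_i$ is a polynomial in $g_i$, commutativity of $g_1,\dots,g_d$ forces all the $s_i$ and $u_i$ to commute pairwise, and one gets the \emph{equality}
\[
\mathcal Z_G(g_1,\dots,g_d)=\mathcal Z_{\,\mathcal Z_{\cdots\mathcal Z_G(s_1)\cdots}(s_d)\,}(u_1,\dots,u_d),
\]
written in the paper for $d=2$ as $\mathcal Z_G(g_1,g_2)=\mathcal Z_{\mathcal Z_{\mathcal Z_G(s_1)}(s_2)}(u_1,u_2)$. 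The inner group is reductive (iterated centralizers of commuting semisimple elements in a reductive group stay reductive), and the unipotent parts are then handled by the finiteness of unipotent conjugacy classes. Thus no Levi decomposition and no component-group analysis is needed, which is exactly the obstruction your naive induction on $d$ via $\mathcal Z_G(g_1,\dots,g_d)=\mathcal Z_H(g_d)$ with $H=\mathcal Z_G(g_1,\dots,g_{d-1})$ runs into; the semisimple reduction is the whole proof, not a patch on the induction. Two places to tighten: (i) your inclusion $\mathcal Z_G(g_1,\dots,g_d)\subseteq\mathcal Z_G(g_{1,s},\dots,g_{d,s})$ should really be the equality $\mathcal Z_G(g_1,\dots,g_d)=\mathcal Z_{\mathcal Z_G(g_{1,s},\dots,g_{d,s})}(g_{1,u},\dots,g_{d,u})$ --- exactness is what lets the reduction determine the common centralizer rather than merely contain it; (ii) the last step --- finitely many $z$-classes of commuting unipotent $d$-tuples in a reductive group --- is stated by both you and the paper as following from finiteness of unipotent classes, but making that iteration airtight does require a short additional argument beyond the one-element statement.
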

\begin{proof}
For $d=1$, this is a result due to Steinberg as mentioned earlier in this section. We prove 
this for $d=2$. Let $(g_1,g_2)\in \C_2(G)$, write $g_1=s_1u_1$ and $g_2=s_2u_2$, its 
Jordan decomposition. First, we consider the case when $s_1=s_2=s$. In this case,
$$\mathcal Z_G(g_1,g_2)= \mathcal Z_G(g_1)\cap \mathcal Z_G(g_2) = \mathcal Z_{\mathcal 
Z_G(s)}(u_1)\cap \mathcal Z_{\mathcal Z_G(s)}(u_2)= \mathcal Z_{\mathcal Z_G(s)}(u_1,u_2).$$
Since, $\mathcal Z_G(s)$ is a reductive group, and there are only finitely many unipotent classes 
in such groups, this number is finite. 

Now, we need to deal with the general case. Note that since $g_1$ and $g_2$ commute, the 
elements $s_1,s_2, u_1$ and $u_2$ commute pairwise. This is because of Jordan decomposition which 
also gives us that $s_i, u_i$ are polynomials in $g_i$. Hence $u_1, u_2, s_2\in \mathcal Z_G(s_1)$, 
further, $u_1, u_2\in \mathcal Z_{\mathcal Z_{G}(s_1)}(s_2)$. Now, 
$$\mathcal Z_G(g_1,g_2)= \mathcal Z_G(s_1)\cap \mathcal Z_G(u_1) \cap \mathcal Z_G(s_2) \cap 
\mathcal Z_G(u_2)= \mathcal Z_{\mathcal Z_{\mathcal Z_G(s_1)}(s_2)}(u_1,u_2).$$
Once again the group, $\mathcal Z_G(s_1)$ is reductive and $\mathcal Z_{\mathcal Z_{G}(s_1)}(s_2)$ 
as well. Since there are only finitely many conjugacy classes of unipotents in a reductive group, 
we get the finiteness of $z$-classes. The proof for $d$-tuples can be done similarly by looking at 
repeated centralizers of the semisimple components.
\end{proof}
\noindent Now we prove,
\begin{proposition}\label{finite-branches}
Let $G$ be a reductive algebraic group. Then, the number of $z$-classes of tuples in $G$ (i.e., 
$z$-classes in $\C(G)$) is finite.
\end{proposition}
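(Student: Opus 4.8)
The plan is to reduce the finiteness of $z$-classes of arbitrary tuples to the finiteness of $z$-classes of $d$-tuples for a single bounded value of $d$, which is already established in Proposition~\ref{finite-cdg}. The key observation is that for any commuting tuple $(g_1,\ldots,g_e)$ of arbitrary length, the common centralizer $\mathcal Z_G(g_1,\ldots,g_e)$ is the centralizer of the subgroup (or the Zariski-closed subgroup) generated by the $g_i$, and more to the point, it equals the centralizer of a \emph{subtuple} of bounded length. I would make this precise by a noetherian / dimension-descent argument: among the subgroups $\mathcal Z_G(g_1,\ldots,g_k)$ for $k=1,\ldots,e$, we get a descending chain of closed subgroups; each strict drop either lowers the dimension or (at fixed dimension) lowers the number of connected components, so the chain stabilizes after at most $N$ steps where $N$ depends only on $\dim G$ and the component count of $G$ — in particular $N$ is bounded independently of $e$. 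Hence after reordering, $\mathcal Z_G(g_1,\ldots,g_e) = \mathcal Z_G(g_{i_1},\ldots,g_{i_N})$ for some subtuple of length $\le N$.

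With that reduction in hand, the argument concludes quickly. Every $z$-class of tuples in $\C(G)$ contains a representative whose common centralizer is realized by a commuting tuple of length at most $N$; therefore the $z$-classes of tuples in $\C(G)$ are in bijection with a subset of the union over $1\le d\le N$ of the $z$-classes of $d$-tuples. By Proposition~\ref{finite-cdg} each of these finitely many sets is finite, so the union is finite, and we are done.

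There is one point that needs care: the bound $N$ on the length of chains of centralizers. For the dimension part this is immediate since $\dim \mathcal Z_G$ is a non-negative integer bounded by $\dim G$. For the component-count part, one should note that the number of connected components of a closed subgroup of $G$ cannot increase along such a chain — indeed if $H' \subseteq H$ are closed subgroups with $\dim H' = \dim H$, then $H'$ contains a full-dimensional subvariety of $H$, forcing $H'$ to contain $H^\circ$ and hence $|H/H'|$ to divide $|H/H^\circ|$; so the component count is bounded by $|G/G^\circ|$, which is finite. Alternatively, and perhaps more cleanly, one can avoid component-counting entirely by first passing to identity components: a descending chain $\mathcal Z_G(g_1,\ldots,g_k)^\circ$ of connected closed subgroups of strictly decreasing dimension has length at most $\dim G + 1$, and then one handles the finitely many possible "component group enlargements" separately, or simply observes that there are only finitely many closed subgroups containing a given connected one up to the relevant equivalence.

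The main obstacle is making the chain-stabilization bound genuinely uniform in $e$; once that is clean, the rest is a formal consequence of Proposition~\ref{finite-cdg}. I would therefore spend the bulk of the write-up on the centralizer-chain lemma and treat the reduction to $d$-tuples as a one-line corollary.
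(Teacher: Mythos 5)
Your overall strategy---descend along the chain of centralizers $\mathcal Z_G(g_1) \supseteq \mathcal Z_G(g_1,g_2) \supseteq \cdots$, bound the number of strict inclusions uniformly by some $N$, and thereby reduce to $z$-classes of $d$-tuples with $d \le N$---is essentially the chain-descent idea the paper itself uses together with Proposition~\ref{finite-cdg}. So the route is the same; the real question is whether your justification of the uniform bound $N$ holds up, and you correctly identify that as the crux.

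It does not hold up as written. You assert that if $H' \subseteq H \subseteq G$ are closed subgroups of equal dimension then $H' \supseteq H^\circ$, and conclude ``the component count is bounded by $|G/G^\circ|$.'' The first half is correct, but the conclusion does not follow: the number of connected components of a closed subgroup of $G$ is in no way bounded by $|G/G^\circ|$. For instance $\mathbb{G}_m$ is connected, yet its closed subgroup $\mu_n$ has $n$ components; more to the point, the component count can jump up each time the dimension drops along your chain, so ``cannot increase along such a chain'' is simply false across dimension drops. What your divisibility $|H/H'| \mid |H/H^\circ|$ actually shows is that once a term of a given dimension first appears in the chain, the number of further strict drops at that dimension is controlled by the component count of \emph{that term}---a quantity you have not bounded. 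A genuine uniform $N$ requires knowing that component groups of iterated centralizers in a reductive group are uniformly bounded; this is true but needs structural input (semisimple centralizers have component group bounded by the fundamental group, there are finitely many unipotent classes with known centralizers, and these facts have to be threaded through the iteration used in Proposition~\ref{finite-cdg}), not the bound $|G/G^\circ|$. Your fallback remarks (``finitely many component group enlargements,'' ``finitely many closed subgroups containing a given connected one up to the relevant equivalence'') are likewise unsubstantiated and not obviously true without that input. For comparison, the paper's proof at this spot is also brief---it just records that a strict inclusion drops dimension or is of finite index and asserts the chain has finite length---but it does not commit the specific error of bounding subgroup component counts by $|G/G^\circ|$.
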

\begin{proof}
Let $(g_1,\ldots, g_d)\in \C(G)$ and $\mathcal Z_G(g_1, \ldots, g_d)= \displaystyle 
\bigcap_{i=1}^d \mathcal Z_G(g_i)$. We can write,
\begin{eqnarray*}
\mathcal Z_G(g_1) &\supset& \mathcal Z_G(g_1, g_2) =  \mathcal Z_{\mathcal 
Z_G(g_1)}(g_2) \supset \cdots \supset \mathcal Z_G(g_1, \ldots, g_i) = \mathcal Z_{\mathcal 
Z_G(g_1, \ldots, g_{i-1})}(g_i)\supset \cdots \\ 
& & \cdots \supset \mathcal Z_G(g_1, \ldots, g_d) = \mathcal Z_{\mathcal 
Z_G(g_1, \ldots, g_{d-1})}(g_d).
\end{eqnarray*}
Note that for large enough $d$, this series will end in an Abelian group. From 
Proposition~\ref{finite-cdg}, each step in the above chain has finitely many choices. Further, 
the length of such a chain is finite. Since, a strict inclusion in the above chain can come for 
one of the following two reasons: either the subgroup is connected then dimension of the subgroup 
decreases or if the subgroup is not connected then it is of finite index (being algebraic 
subgroup). Thus, we have finitely many $z$-classes of tuples. 
\end{proof}
\noindent The number of $z$-classes of $\C(G)$ will turn out to be the size of ``branching 
matrix'' of $G$ which we will define next.

\subsection{Branching matrix}\label{subsection-BM}
Now, we define the {\bf branching matrix} $B_G$ for an algebraic group $G$. The notion 
of branching matrix, and its relation with the commuting tuples for finite groups, has been 
explored in~\cite{Sh,SS,SS2}. The rows and columns of this matrix correspond to $z$-classes of 
tuples in $G$ (i.e. $z$-classes in $\C(G)$). We begin with fixing a convention where the 
$z$-classes of the group, i.e, for $d=1$, will be written first. Furthermore, we take the first 
entry to be the $z$-class of identity (equivalently, any central element) of $G$. Then, we take 
the $z$-classes of $2$-tuples, $3$-tuples and so on. Fix an indeterminate $\psi$. The 
entries of the matrix $B_G$ are monomials in the variable $\psi$, and are defined as follows. For a 
$z$-class of a commuting $d$-tuple $(g_1,\ldots, g_d)$ we look at the group $\mathcal 
Z_G(g_1,\ldots, g_d):=H$, and compute its $z$ classes (i.e., of $1$-tuples). Notice that 
$\C(H)\subset \C(G)$. Suppose an $r$-tuple $(x_1,\ldots,x_r)\in \C_r(G)$ appears as a $z$-class of 
$H= \mathcal Z_G(g_1,\ldots, g_d)$. Then, in the column corresponding to the $z$-class of 
$(g_1,\ldots, g_d)$, we put the entry 
$$\psi^{\dim zcl(x_1,\ldots, x_r) -\dim cl(x_1,\ldots, x_r)}$$  
in $B_G$ where $zcl(x_1,\ldots, x_r)$ and $cl(x_1,\ldots, x_r)$ denote the $z$-class and conjugacy 
class in $H=\mathcal Z_G(g_1,\ldots, g_d)$ of the tuple $(x_1,\ldots, x_r)$, respectively. 
Equivalently, there exists an element $y\in H$ such that $\mathcal Z_H(y)=\mathcal Z_G(x_1, 
\ldots, x_r)$, and $zcl(x_1,\ldots, x_r):=zcl(y)$ and $cl(x_1,\ldots, x_r):=cl(y)$. If an
$r$-tuple $(x_1,\ldots,x_r)\in \C_r(G)$ does not appear as a $z$-class of $H= \mathcal 
Z_G(g_1,\ldots, g_d)$, then we enter $0$ in $B_G$. Thus, to compute the branching matrix of an 
algebraic group we need to follow the steps mentioned below:
\begin{enumerate}
\item To begin with, we compute the $z$-classes in $G$, say the representatives for these classes 
are $\{z_1=e, z_2, \ldots, z_r \}$. The first column corresponds to the identity (as per our 
convention). Now, to obtain the entries in first column, we compute the $z$-classes in $\mathcal 
Z_G(1)=G$, and enter $\psi^{\dim zcl(z_i)-\dim cl(z_i)}$ as entries.
\item Then, we fill the columns $2$ to $r$ corresponding to the non-identity $z$-classes, i.e., 
for $z_2, z_3, \ldots, z_r$. For example, to get the second column we need to compute the 
$z$-classes 
within $\mathcal Z_G(z_2)$. We fill the entries in $B_G$, as per the formula, if the $z$-classes 
match with the ones from that of $G$ obtained in the previous step, else we create a new 
row and a new column as this would correspond to a $2$-tuple of $G$ (i.e., it would give rise 
to some $z$-class of $\C_2(G)$). We do this process for all $\mathcal Z_G(z_i)$, and 
whenever we find a new type of $2$ tuple, we add a new row and column at the end.  
\item After finishing the previous step for all $z$-classes (of $1$-tuples), we look at the new 
ones obtained in those steps. These new ones correspond to $2$-tuples which will give rise to new 
centralizer subgroups, namely, the intersection of centralizers. We compute the $z$-classes in 
these new centralizer subgroups to fill the corresponding column as explained in the previous 
step, and possibly obtain some new types of $3$-tuples. We continue this process till we get no 
more new tuples. The process ends when we get to the Abelian centralizers.  
\end{enumerate}
Notice that there is no guarantee that $B_G$ is a finite size matrix at the moment. To understand 
these steps better, we work out some examples with the help of following, 
\begin{remark}\label{computingBG}
To understand the dimension of $cl(g)$, for $g\in G$, we can look at the dimension of $G/\mathcal 
Z_G(g)$. However, to understand the dimension of $zcl(g)$ we need to look at the set $zcl(g) = 
\bigcup_{t} cl(t)$ where $\mathcal Z_G(t)$ is conjugate to $\mathcal Z_G(g)$. By associating 
$\mathcal Z_G(t)$ to each $t$ we need to understand various conjugates of $\mathcal Z_G(g)$, and 
those $t$ (up to conjugacy) for which $\mathcal Z_G(t)= \mathcal Z_G(g)$. This amounts to 
understanding the set $G/N_G(\mathcal Z_G(g)) \bigcup \{x\in cl(G) \mid \mathcal Z_G(x)= \mathcal 
Z_G(g)\}$. Thus, 
$$\dim zcl(g)-\dim cl(g) = -\dim(N_G(\mathcal Z_G(g))) + \dim \{x\in cl(G) \mid \mathcal Z_G(x)= 
\mathcal Z_G(g)\} + \dim \mathcal Z_G(g).$$
We will take help of this equation in the following computations. 
\end{remark}
\begin{example} For the algebraic group $GL_2$ over $K$ the number of $z$-classes is $3$ 
given by $I, \begin{pmatrix} 1&1\\0&1 \end{pmatrix}$ and $\begin{pmatrix}\lambda_1 & \\& 
\lambda_2 \end{pmatrix}$ where $\lambda_1\neq \lambda_2$. Notice that there are no $z$-classes 
for $d\geq 2$ tuples as all non-trivial centralizers are Abelian. The branching matrix is as 
follows:
$$B_{GL_2} = \begin{pmatrix} \psi & 0 &0 \\ \psi &\psi^2&0 \\ \psi^2& 0 & \psi^2\end{pmatrix}.$$
To get the first column we compute $z$-classes in $\mathcal Z_{GL_2}(I)=GL_2$, to get the 
second column we compute the $z$-classes in $\mathcal Z_{GL_2} \begin{pmatrix} 1&1\\0&1 
\end{pmatrix}= \left\{\begin{pmatrix} a&b\\0&a \end{pmatrix}\mid a\in K^*, b\in K\right\}$, and to 
get the third column we compute the $z$-classes in $\mathcal Z_{GL_2} \begin{pmatrix}\lambda_1 & 
\\& \lambda_2 \end{pmatrix}$, which is the diagonal group.
\end{example}
\begin{example}
Let us look at $GL_3$. The number of $z$-classes in $GL_3(K)$ is $6$ and there are no higher 
tuples. We write them in the following order: 
$$\left\{aI_3, 
\begin{pmatrix}a&1&\\&a&\\&&a\end{pmatrix}, \begin{pmatrix}a&&\\&a&\\&&b\end{pmatrix}
, \begin{pmatrix}a&1&\\&a&1\\&&a\end{pmatrix}, 
\begin{pmatrix}a&1&\\&a&\\&&b\end{pmatrix}, 
\begin{pmatrix}a&&\\&b&\\&&c\end{pmatrix} \right\}
$$
where $a, b$ are distinct and non-zero. The branching matrix $B_{GL_3}$ is: 
$$\begin{pmatrix}
\psi & 0 &0 & 0 & 0 & 0\\
\psi & \psi^2 & 0 & 0 & 0 & 0\\
\psi^2 & 0 & \psi^2 & 0 & 0 & 0\\
\psi & \psi^2 & 0 & \psi^3 & 0 & 0\\
\psi^2 & \psi^3 & \psi^2 & 0 & \psi^3 & 0\\
\psi^3 & 0 & \psi^3 & 0 & 0 & \psi^3
\end{pmatrix}
.$$
\end{example}

\begin{example}
For the group $GL_4$, we have $14$ $z$-classes of $1$-tuples. In addition to these, there are 
four more new $z$-classes of $2$-tuples (indicated in blue colour in the branching matrix) and one more new $z$-class of triples (indicated in red colour). The $z$-class of 
triple has its centralizer, an Abelian subgroup of maximal dimension. The representative of 
$z$-classes are as follows:
$$aI_4, \left(\begin{smallmatrix}a&1&&\\&a&&\\&&a&\\&&&a\end{smallmatrix}\right), 
\left(\begin{smallmatrix} a&1&&\\&a&&\\&&a&1\\&&&a\end{smallmatrix}\right), 
\left(\begin{smallmatrix}a&1&&\\&a&1&\\&&a&\\&&&a\end{smallmatrix}\right),\left(\begin{smallmatrix}
aI_3 & \\ & b\end{smallmatrix}\right), 
\left(\begin{smallmatrix}a&1&&\\&a&&\\&&a&\\&&&b\end{smallmatrix}\right),
\left(\begin{smallmatrix}aI_2&\\&bI_2\end{smallmatrix}\right), 
\left(\begin{smallmatrix}a&1&&\\&a&&\\&&b&\\&&&b\end{smallmatrix}\right), $$
$$
\left(\begin{smallmatrix}a&&&\\&a&&\\&&b&\\&&&c\end{smallmatrix}\right), 
\left(\begin{smallmatrix}a&1&&\\&a&1&\\&&a&1\\&&&a\end{smallmatrix}\right),
\left(\begin{smallmatrix}a&1&&\\&a&1&\\&&a&\\&&&b\end{smallmatrix}\right),
\left(\begin{smallmatrix}a&1&&\\&a&&\\&&b&1\\&&&b\end{smallmatrix}\right),
\left(\begin{smallmatrix}a&1&&\\&a&&\\&&b&\\&&&c\end{smallmatrix}\right),
\left(\begin{smallmatrix}a&&&\\&b&&\\&&c&\\&&&d\end{smallmatrix}\right),
$$
and, for $2$ and $3$-tuples 
$$
\left( \left(\begin{smallmatrix}a&&1&\\&a&&1\\&&a&\\&&&a\end{smallmatrix}\right), 
\left(\begin{smallmatrix}b&&c&1\\&b&&c\\&&b&\\&&&b\end{smallmatrix}\right)\right),
\left( \left(\begin{smallmatrix}a&&1&\\&a&&1\\&&a&\\&&&a\end{smallmatrix}\right), 
\left(\begin{smallmatrix}b&&c&\\&b&&d\\&&b&\\&&&b\end{smallmatrix}\right)\right),
\left( \left(\begin{smallmatrix}a&1&&\\&a&&\\&&a&\\&&&a\end{smallmatrix}\right), 
\left(\begin{smallmatrix}b&&&1\\&b&&\\&&b&\\&&&b\end{smallmatrix}\right)\right),$$
$$
\left( \left(\begin{smallmatrix}a&&&1\\&a&&\\&&a&\\&&&a\end{smallmatrix}\right), 
\left(\begin{smallmatrix}b&&&\\&b&&1\\&&b&\\&&&b\end{smallmatrix}\right)\right),
\left( \left(\begin{smallmatrix}a&&1&\\&a&&1\\&&a&\\&&&a\end{smallmatrix}\right), 
\left(\begin{smallmatrix}b&&c&\\&b&&d\\&&b&\\&&&b\end{smallmatrix}\right),\left(\begin{smallmatrix}
u&&&1\\&u&1&\\&&u&\\&&&u\end{smallmatrix}\right)\right).$$
The branching matrix, with the row and column indexing as above, is 
$$\left(
\begin{smallmatrix}
\psi & 0 & 0 & 0 & 0 & 0 & 0 & 0 & 0 & 0 & 0 & 0 & 0 & 0 & 0 & 0 & 0 & 0 & 0 \\ 
\psi & \psi^{2} & 0 & 0 & 0 & 0 & 0 & 0 & 0 & 0 & 0 & 0 & 0 & 0 & 0 & 0 & 0 & 0 & 0 \\ 
\psi & 0 & \psi^{2} & 0 & 0 & 0 & 0 & 0 & 0 & 0 & 0 & 0 & 0 & 0 & 0 & 0 & 0 & 0 & 0 \\ 
\psi & \psi^2 & 0 & \psi^{3} & 0 & 0 & 0 & 0 & 0 & 0 & 0 & 0 & 0 & 0 & 0 & 0 & 0 & 0 & 0 \\ 
\psi^2 & 0 & 0 & 0 & \psi^{2} & 0 & 0 & 0 & 0 & 0 & 0 & 0 & 0 & 0 & 0 & 0 & 0 & 0 & 0 \\ 
\psi^2 & \psi^3 & 0 & 0 & \psi^2 & \psi^{3} & 0 & 0 & 0 & 0 & 0 & 0 & 0 & 0 & 0 & 0 & 0 & 0 & 0 \\ 
\psi^2 & 0 & 0 & 0 & 0 & 0 & \psi^{2} & 0 & 0 & 0 & 0 & 0 & 0 & 0 & 0 & 0 & 0 & 0 & 0 \\ 
\psi^2 & \psi^3 & 0 & 0 & 0 & 0 & \psi^2 & \psi^{3} & 0 & 0 & 0 & 0 & 0 & 0 & 0 & 0 & 0 & 0 & 0 \\ 
\psi^3 & 0 & 0 & 0 & \psi^3 & 0 & \psi^3 & 0 & \psi^{3} & 0 & 0 & 0 & 0 & 0 & 0 & 0 & 0 & 0 & 0 \\ 
\psi & \psi^2 & \psi^3 & \psi^3 & 0 & 0 & 0 & 0 & 0 & \psi^{4} & 0 & 0 & 0 & 0 & \psi^4 & 0 & \psi^3 
& \psi^3 & 0 \\ 
\psi^2 & \psi^3 & 0 & \psi^4 & \psi^2 & \psi^3 & 0 & 0 & 0 & 0 & \psi^{4} & 0 & 0 & 0 & 0 & 0 & 
\psi^4 & \psi^4 & 0 \\ 
\psi^2 & \psi^3 & \psi^4 & 0 & 0 & 0 & \psi^2 & \psi^3 & 0 & 0 & 0 & \psi^{4} & 0 & 0 & 0 & \psi^4 & 
0 & 0 & 0 \\ 
\psi^3 & \psi^4 & 0 & 0 & \psi^3 & \psi^4 & \psi^3 & \psi^4 & \psi^3 & 0 & 0 & 0 & \psi^{4} & 0 & 0 
& 0 & 0 & 0 & 0 \\ 
\psi^4 & 0 & 0 & 0 & \psi^4 & 0 & \psi^4 & 0 & \psi^4 & 0 & 0 & 0 & 0 & \psi^{4} & 0 & 0 & 0 & 0 & 0 
\\ 
0 & {\color{blue}\psi} & {\color{blue}\psi^2} & 0 & 0 & 0 & 0 & 0 & 0 & 0 & 0 & 0 & 0 & 0 & 
{\color{blue}\psi^{3}} & 0 & 0 & 0 & 0 \\ 
0 & {\color{blue}\psi^2} &{\color{blue} \psi^3} & 0 & 0 & 0 & 0 & 0 & 0 & 0 & 0 & 0 & 0 & 0 & 0 & 
{\color{blue}\psi^{3}} & 0 & 0 & 0 \\ 
0 & {\color{blue}\psi} & 0 & 0 & 0 & 0 & 0 & 0 & 0 & 0 & 0 & 0 & 0 & 0 & 0 & 0 & 
{\color{blue}\psi^{3}} & 0 & 0 \\ 
0 & {\color{blue}\psi}& 0 & 0 & 0 & 0 & 0 & 0 & 0 & 0 & 0 & 0 & 0 & 0 & 0 & 0 & 0 & 
{\color{blue}\psi^{3}} & 0 \\ 
0 & 0 & 0 & 0 & 0 & 0 & 0 & 0 & 0 & 0 & 0 & 0 & 0 & 0 & {\color{red}\psi^4} & {\color{red}\psi^4} & 
{\color{red}\psi^3} & {\color{red}\psi^3} & {\color{red}\psi^{5}} 
\end{smallmatrix}\right).$$
\end{example}
\vskip2mm
\noindent We list some useful properties of the branching matrix $B_G$ when $G$ is an algebraic 
group.
\begin{proposition}
Let $G$ be an algebraic group. The matrix $B_G$ has the following properties:
\begin{enumerate}
\item The $(1,1)^{th}$ entry of $B_G$ is $\psi^{dim(\mathcal Z(G))}$. In fact, the diagonal entries 
of $B_G$ are $\psi^{dim(\mathcal Z(\mathcal Z_G(g_1, \ldots, g_r))}$ where $\mathcal Z(\mathcal 
Z_G(g_1,\ldots, g_r))$ is the center of $\mathcal Z_G(g_1, \ldots, g_r)$.
\item The entries in the first column are $\psi^{\dim(zcl(g))-\dim cl(g)}$ for various 
$z$-classes in $G$, and $0$ corresponding to $2$-tuples onwards. 
\item All entries in the first row, except first one, are $0$. 
\item Every row (second onwards) has a non-zero entry before the diagonal, i.e, for all $i>1$ 
there exists $i_0<i$ such that $(B_G)_{i,i_0}\neq 0$. 
\item If $\mathcal Z_G(g_1,\ldots, g_r)$ is Abelian, then the corresponding column has all 
entries $0$ except at the diagonal which is $\psi^{dim(\mathcal Z_G(g_1, \ldots, g_r))}$.
\item When $G$ is a reductive group, the branching matrix $B_G$ is a finite size matrix. 
\end{enumerate}
\end{proposition}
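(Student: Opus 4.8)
The plan is to verify the six properties one at a time, each following directly from the definitions of the branching matrix, the $z$-class, and the centralizer chain. Throughout, I fix the convention from Section~\ref{subsection-BM}: rows and columns are indexed by $z$-classes of tuples in $\C(G)$, with the $z$-class of the identity first, and the entry in the column of a $z$-class $[(g_1,\ldots,g_d)]$ coming from a $z$-class $[(x_1,\ldots,x_r)]$ of $H=\mathcal Z_G(g_1,\ldots,g_d)$ is $\psi^{\dim zcl_H(x)-\dim cl_H(x)}$ (and $0$ if that $z$-class of $H$ is not realized inside $H$).

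For (1), the diagonal entry in the column of $[(g_1,\ldots,g_r)]$ records the contribution of the $z$-class \emph{of $H=\mathcal Z_G(g_1,\ldots,g_r)$ itself}, i.e.\ the class whose centralizer (inside $H$) is all of $H$; this is exactly the $z$-class of any central element of $H$. For such an element $z$ we have $cl_H(z)=\{z\}$, so $\dim cl_H(z)=0$, while $zcl_H(z)$ is the set of elements of $H$ whose centralizer in $H$ equals $H$, which is precisely $\mathcal Z(H)$, of dimension $\dim\mathcal Z(H)$. Hence the diagonal entry is $\psi^{\dim\mathcal Z(H)}$; applied to $H=G$ (the first column, $r$ corresponding to the identity) this gives $\psi^{\dim\mathcal Z(G)}$. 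For (2): the first column is the column of $\mathcal Z_G(1)=G$, whose $z$-classes of $1$-tuples are exactly the $z$-classes of $G$, contributing $\psi^{\dim zcl(g)-\dim cl(g)}$; a tuple of length $\geq 2$ can never have its common centralizer equal to that of a $1$-tuple unless it is $z$-equivalent to a $1$-tuple, and by our indexing convention the genuine $2$-tuple-onward rows are distinct from the $1$-tuple rows, so those entries are $0$. Statement (3) is the transpose-style observation: the first row is indexed by the $z$-class of the identity, i.e.\ the $z$-class whose representative centralizer is $G$; this class appears inside $H=\mathcal Z_G(g_1,\ldots,g_d)$ only when $H=G$ (since a subgroup of $G$ conjugate to $G$ must be $G$), which forces the column to be the first one, so all other first-row entries vanish. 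For (5), if $H=\mathcal Z_G(g_1,\ldots,g_r)$ is Abelian, then every element of $H$ is central in $H$, so $H$ has a single $z$-class (that of any of its elements), with $\dim cl_H=0$ and $\dim zcl_H=\dim H$; thus the column has its only nonzero entry on the diagonal, equal to $\psi^{\dim H}$, consistent with (1) since $\mathcal Z(H)=H$.

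Property (4) is the one requiring a genuine argument, and I expect it to be the main (minor) obstacle. The claim is that every row past the first has a nonzero entry strictly to the left of the diagonal. Fix a $z$-class $c=[(x_1,\ldots,x_r)]$ with $r\geq 1$ and $c$ not the identity class, indexing row $i>1$; I must produce a column $i_0<i$ with $(B_G)_{i,i_0}\neq 0$, i.e.\ a $z$-class of a tuple $(g_1,\ldots,g_d)$, appearing \emph{earlier} in our ordering, such that $c$ is realized as a $z$-class inside $H=\mathcal Z_G(g_1,\ldots,g_d)$. The natural candidate is a proper ``predecessor'' in the centralizer chain: writing $K=\mathcal Z_G(x_1,\ldots,x_r)$, consider the tuple $(x_1,\ldots,x_{r-1})$ (or, if $r=1$, the identity $1$-tuple, whose centralizer is $G$); its common centralizer $H'=\mathcal Z_G(x_1,\ldots,x_{r-1})$ properly contains $K$ unless $x_r$ is already central in $H'$, and in either case the $z$-class of $(x_1,\ldots,x_r)$ — equivalently the $z$-class in $H'$ of the element $x_r$ — is realized inside $H'$, contributing a nonzero entry in the column of $[(x_1,\ldots,x_{r-1})]$. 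It remains to check that this column is genuinely \emph{earlier} (index $i_0<i$): by the convention, $z$-classes of $k$-tuples are listed before those of $(k+1)$-tuples, and the construction peels off one entry, so $[(x_1,\ldots,x_{r-1})]$ has smaller length; when $r=1$ the predecessor is the identity class, which is first of all. A mild subtlety is that a $z$-class does not have a well-defined ``length'' (the same common centralizer can arise from tuples of different lengths), so one should phrase this as: the row for $c$ was \emph{created} at some stage $k$ of the algorithm (as a new $k$-tuple), and the predecessor column was already present at stage $k-1$ or earlier; this is exactly how the algorithm in Section~\ref{subsection-BM} builds $B_G$, so the lower-triangular-up-to-ordering structure is automatic.

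Finally, (6) is immediate from the earlier results: Proposition~\ref{finite-branches} shows that for a reductive $G$ there are only finitely many $z$-classes of tuples in $\C(G)$, and since the rows and columns of $B_G$ are indexed precisely by these $z$-classes, $B_G$ is a finite square matrix. I would assemble the write-up so that (1), (2), (3), (5), (6) are dispatched in a sentence or two each, and (4) gets a short paragraph with the predecessor-chain argument made precise via the construction algorithm.
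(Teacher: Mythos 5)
Your proposal is correct and follows essentially the same approach as the paper: (1) via identifying the diagonal entry with the $z$-class of central elements, (2), (3), (5) as direct consequences of the construction, (4) via the observation that each new $z$-class first arises as a branch of a previously listed column, and (6) from Proposition~\ref{finite-branches}. Your write-up is more detailed than the paper's (which disposes of (2)--(5) in one line each), and your explicit "predecessor" and "stage of the algorithm" arguments for (4), together with the remark that a $z$-class has no canonical length, are a useful elaboration of the paper's terse "follows from the process to obtain $B_G$."
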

\begin{proof}
To prove (1) we note that a group $H$ is centralizer of its central elements. The central 
elements form a single $z$-class but distinct conjugacy classes. Thus, $(g_1, \ldots, g_r)$ 
appearing as a $z$-class in the group $\mathcal Z_G(g_1, \ldots, g_r)$ gives the following: 
$zcl((g_1, \ldots, 
g_r))= \mathcal Z(\mathcal Z_G(g_1, \ldots, g_r))$, and $cl((g_1, \ldots, 
g_r))= (g_1, \ldots, g_r)$. Hence the required result.

Proof of (2) is clear from the process to obtain $B_G$. Proof of (3) follows as the group $G$ 
itself can't appear as a subgroup of its proper centralizer. Proof of (4) follows from the 
process to obtain $B_G$, as a new row (and column) is added when a new centralizer type appears. 
Proof of (5) is clear.

The proof of (7) follows form  Proposition~\ref{finite-branches}.
\end{proof}
We require certain properties of the branching matrix $B_H$ when $H=\mathcal Z_G(g_1, \ldots, 
g_r)$ with respect to $B_G$. Recall the process of constructing $B_G$ as mentioned in the 
beginning of this subsection. All of the $z$-classes of $m$-tuples of $H$ are nothing but 
$z$-classes of $r+m$-tuples of $G$. Thus, to get $B_H$ we mark these rows and columns in $B_G$ and 
collect these entries in a new matrix. We warn here that the submatrix simply obtained from $B_G$ 
may not be the $B_H$, if we simply compute branching matrix for $H$ as per definition, since we 
have not fixed any strict order on the tuples. The matrix $B_H$ is a submatrix of $B_G$ consisting 
of those entries $(B_{G})_{a,b}$, where $a$ and $b$ occur in the list of branching of the class 
$\tau$ for $z$-classes $a, b$ of $\mathcal Z_G(\tau)$ of tuples. We have the following,
\begin{proposition}\label{BM-subgroup}
Let $(g_1,\ldots, g_r)$ be an $r$-tuple representing a $z$-class in a reductive group $G$. Then, 
\begin{enumerate}
\item the branching matrix $B_H$ of $H:=\mathcal Z_G(g_1, \ldots, g_r)$ is a submatrix of 
$B_G$. 
\item Let $\tau$ be a branch of $H$. Then, $(B_H^d)_{a\tau}= B_{\mathcal 
Z_H(\tau)}^d$.
\end{enumerate} 
\end{proposition}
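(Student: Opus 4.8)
\medskip
\noindent\textbf{Proof plan.}
The plan is to read both parts directly off the recursive recipe for the branching matrix, using that everything attached to a column of $B_G$ --- the $z$-classes of the centralizer group sitting at that column, together with the exponents $\dim zcl-\dim cl$ --- is computed \emph{inside} that centralizer group, and hence is unchanged when the same group is regarded as a centralizer in $H$ rather than in $G$.

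For part (1) the first step is to match up index sets. With $H=\mathcal Z_G(g_1,\dots,g_r)$, any $(x_1,\dots,x_m)\in\C_m(H)$ commutes componentwise with $(g_1,\dots,g_r)$, so $(g_1,\dots,g_r,x_1,\dots,x_m)\in\C_{r+m}(G)$ and $\mathcal Z_H(x_1,\dots,x_m)=\mathcal Z_G(g_1,\dots,g_r,x_1,\dots,x_m)$; hence a $z$-class of tuples in $\C(H)$ determines a $z$-class of tuples in $\C(G)$, and I call $I_H$ the resulting set of rows/columns of $B_G$. The second step is an entrywise comparison: the column of $B_H$ indexed by $\sigma\in I_H$ carries the group $M=\mathcal Z_H(\sigma)$, and its entries are obtained by listing the $z$-classes of $M$ and recording $\psi^{\dim zcl_M(y)-\dim cl_M(y)}$ in row $[\mathcal Z_M(y)]$; but this same $M$ sits at the corresponding column of $B_G$, and $zcl_M$, $cl_M$, $\dim$ are intrinsic to $M$, so the two columns agree term by term. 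Thus $B_H$ is, up to the reordering of rows and columns flagged just before the statement, the principal submatrix of $B_G$ supported on $I_H$.

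For part (2) I would apply (1) with $(H,\tau)$ in place of $(G,(g_1,\dots,g_r))$: this realises $B_K$, where $K:=\mathcal Z_H(\tau)$, as the submatrix of $B_H$ supported on $I_K$ (the $z$-classes of tuples in $\C(K)$), with $\tau$ itself occupying the central $(1,1)$-position of that block, since the central $z$-class of $K$ has centralizer $K=\mathcal Z_H(\tau)$. The crucial observation is that $I_K$ is an invariant set of rows for $B_H$: if $c\in I_K$ and $(B_H)_{bc}\ne 0$, then $b$ is a $z$-class of the group $\mathcal Z_H(c)$, which lies in $K$ (as $c\in I_K$); and since $\mathcal Z_H(c)$ is itself a centralizer of a tuple from $K$, so is $\mathcal Z_H(b)$, whence $b\in I_K$. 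Consequently a nonzero length-$d$ product of entries of $B_H$ emanating from a column in $I_K$ never leaves $I_K$, so by induction on $d$ the $I_K\times I_K$ sub-block of $B_H^{\,d}$ equals $\bigl(B_H|_{I_K\times I_K}\bigr)^{d}=B_K^{\,d}$ by part (1); restricting to the $\tau$-column (which, under the identification from (1), is the first column of $B_K$) gives $(B_H^{\,d})_{a\tau}=B_{\mathcal Z_H(\tau)}^{\,d}$.

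The step I expect to be the real obstacle is concealed in the phrase ``up to reordering'': a priori two centralizer subgroups both lying in $H$ might be fused by $G$-conjugation without being conjugate in $H$, which would make $B_H$ only a quotient of a submatrix of $B_G$ rather than an honest submatrix. I would handle this exactly as in Propositions~\ref{finite-cdg} and~\ref{finite-branches} --- each such subgroup is again the centralizer of a tuple assembled from semisimple and unipotent parts, and $H$ is itself reductive --- or else simply take the submatrix-of-$B_G$ description (the paragraph preceding the statement) as the \emph{definition} of $B_H$, so that only the entrywise agreement of the branching recipe, carried out in part (1), remains to be checked. Reductivity of $G$ is needed throughout only to keep all of the index sets finite.
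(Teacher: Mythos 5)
Your proof is correct and takes essentially the same route as the paper's. For part (1) you spell out the identification $\mathcal Z_H(x_1,\dots,x_m)=\mathcal Z_G(g_1,\dots,g_r,x_1,\dots,x_m)$ and the intrinsic nature of the entries $\psi^{\dim zcl-\dim cl}$, which is the content the paper dismisses as ``clear from the explanation given above.'' For part (2), your ``invariant set of indices'' is exactly the paper's observation — made explicitly for $d=2$ and then propagated by induction — that $(B_H)_{a\eta}(B_H)_{\eta\tau}\neq 0$ forces both $a$ and $\eta$ to lie in the branching submatrix $B_{\mathcal Z_H(\tau)}$, so that the sum defining $(B_H^d)_{a\tau}$ never leaves that block.

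Two caveats on your closing paragraph. The fusion worry is real and the paper does not address it: the ``warning'' in the preceding paragraph concerns only the ordering of indices, and the paper effectively sidesteps the issue by taking the submatrix-of-$B_G$ description as the definition of $B_H$ — which is your second workaround and the cleaner one. But your first suggested fix rests on the claim that ``$H$ is itself reductive,'' and that is false in general: $H=\mathcal Z_G(g_1,\dots,g_r)$ need not be reductive (the centralizer of a regular unipotent element in $GL_n$ is $\mathbb G_m\times\mathbb G_a^{n-1}$), and the paper explicitly flags this right after the proposition (``even though $H$ may not be reductive''). For the same reason, invoking part (1) ``with $(H,\tau)$ in place of $(G,(g_1,\dots,g_r))$'' is not literally licensed by the statement, which hypothesizes a reductive ambient group; what rescues the argument is that the proof of (1) never uses reductivity of the ambient group, only that the centralizer lattice and the branching data are finite, and that persists for such $H$.
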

\begin{proof}
The part (1) is clear from the explanation given above. 

Now to prove (2) we note that given $\tau$, a branch of $H$, the branching submatrix 
$B_{\mathcal{Z}_H(\tau)}$ of $B_H$ consists of $\tau$, branches of $\tau$, and the branches of 
those branches of $\tau$ and so on. When $a$ is a branch of $\tau$, or a branch of a branch of 
$\tau$, we 
see that,
$(B_H^2)_{a\tau} = \sum_{\eta}(B_H)_{a\eta}(B_H)_{\eta\tau}$. Now, $(B_H)_{a\eta}(B_H)_{\eta\tau} 
\neq 0$ if and only if $\eta$ is a branch of $\tau$, and  $a$ is a branch of $\eta$. Hence 
$(B_H)_{a\eta}(B_H)_{\eta\tau}$ is non-zero only if both $a$ and $\eta$ are in the branching 
submatrix, $B_{\mathcal{Z}_H(\tau)}$ of $\tau$. Hence, $(B_H^2)_{a\tau} = 
\sum_{\eta}(B_{\mathcal{Z}_H(\tau)})_{a\eta}(B_{\mathcal{Z}_H(\tau)})_{\eta\tau}=(B_{\mathcal 
Z_H(\tau)}^2)_{a\tau}$.

Now, we complete the proof by induction. Let us assume the equation is true up to $d$, and prove 
it for $d+1$. 
\begin{equation*}
\begin{aligned}
(B_H^{d+1})_{a\tau} &= \sum_{\eta}(B_H)_{a\eta}(B_H^d)_{\eta \tau} = 
\sum_{\eta}(B_H)_{a\eta}(B_{\mathcal{Z}_H(\tau)}^d)_{\eta\tau} \text{ by induction}\\
&= \sum_{\eta}(B_{\mathcal{Z}_H(\tau)})_{a\eta}(B_{\mathcal{Z}_H(\tau)}^d)_{\eta\tau} = 
(B_{\mathcal{Z}_H(\tau)}^{d+1})_{a\tau}.
\end{aligned}
\end{equation*}
This completes the proof.
\end{proof}
\noindent Note that, when $G$ is reductive, $H:=\mathcal Z_G(g_1, \ldots, g_d)$ has finite size 
branching matrix even though $H$ may not be reductive. 

\section{Dimension of commuting tuples}\label{section-dim-ct}

Whether the variety of commuting $d$-tuples, $\C_d(G)$, is an irreducible variety is an 
active topic of research. For a simply connected semisimple algebraic group $G$, 
Richardson~\cite[Theorem C]{Ri} proved that $\C_2(G)$ is an irreducible variety. However, for our 
work we need to only understand the dimension of this variety. Clearly, when $G$ is Abelian, 
$\dim\C_d(G)=d\dim(G)$. We relate the dimension of $\C_d(G)$ with computation of $d$-th power of 
the branching matrix $B_G$ here. Recall, that the entries of $B_G$ are monomials in $\psi$ and 
hence the entries of $B_G^d$ will be polynomials in $\psi$.
\begin{proposition}\label{bgd}
Let $G$ be a reductive algebraic group, and let $H=\mathcal Z_G(g_1, \ldots, g_r)$ be the 
centralizer of an $r$-tuple. Then for $d\geq 1$, 
$$\deg(\mathbf{1}.B_H^d.e_1)\leq \dim(\C_d(H)) \leq \deg(\mathbf{1}.B_H^d.e_1) + \dim(H)$$ 
where $\mathbf{1}$ is a row matrix with all $1$'s and $e_1$ is a column matrix with first entry 
$1$ and all others $0$.
\end{proposition}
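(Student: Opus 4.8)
The plan is to prove the two inequalities by induction on $d$, reducing the count of commuting $d$-tuples to the branching structure encoded in $B_H$. The starting observation is a stratification of $\C_d(H)$: for a commuting $d$-tuple $(x_1,\dots,x_d)$ in $H$, we may first choose its first coordinate $x_1$ (really, its $z$-class and then its conjugacy class), and then the remaining $(d-1)$-tuple lies in $\C_{d-1}(\mathcal Z_H(x_1))$. Since there are finitely many $z$-classes of elements in $H$ (Steinberg, as quoted), $\C_d(H)$ decomposes into finitely many locally closed pieces indexed by the $z$-class of $x_1$; the dimension of $\C_d(H)$ is the maximum over these pieces. For a piece indexed by a $z$-class with representative $y$, the dimension contribution is $\dim zcl(y) + \dim \mathcal C_{d-1}(\mathcal Z_H(y))$ — here $\dim zcl(y)$ accounts for the choice of $x_1$ within its $z$-class, and the fibre over a fixed $x_1$ conjugate to $y$ is a translate of $\mathcal C_{d-1}(\mathcal Z_H(x_1))$. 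Writing $\dim zcl(y) = \dim cl(y) + \bigl(\dim zcl(y) - \dim cl(y)\bigr)$, the exponent $\dim zcl(y) - \dim cl(y)$ is exactly the entry of $B_H$ recorded in the column of the $z$-class of $y$ (row corresponding to $y$ as a $z$-class of $\mathcal Z_H(1) = H$), while the residual $\dim cl(y)$ term is what produces the $+\dim(H)$ slack: $\dim cl(y) \le \dim H$.

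Concretely, I would set up the induction so that the matrix product $\mathbf{1}.B_H^d.e_1$ ``bookkeeps'' the running sum of exponents $\dim zcl - \dim cl$ along a chain of successive centralizers $H \supset \mathcal Z_H(x_1) \supset \mathcal Z_H(x_1,x_2) \supset \cdots$, the column $e_1$ forcing the chain to start at the identity class of $H$, and the row $\mathbf{1}$ summing over all terminal $z$-classes. Using Proposition~\ref{BM-subgroup}(2), the branching submatrix attached to a branch $\tau$ of $H$ — which is $B_{\mathcal Z_H(\tau)}$ — lets me pass from the $d$-step quantity for $H$ to the $(d-1)$-step quantity for each $\mathcal Z_H(y)$: expanding $\mathbf{1}.B_H^d.e_1 = \sum_{y} (B_H)_{y,1}\cdot(\text{sum over }B_H^{d-1}\text{ entries from }y)$ and identifying the inner sum with $\mathbf{1}.B_{\mathcal Z_H(y)}^{d-1}.e_1$ (after matching the identity class of $\mathcal Z_H(y)$ with the branch $y$), one gets the recursion $\deg(\mathbf{1}.B_H^d.e_1) = \max_y \bigl[(\dim zcl(y) - \dim cl(y)) + \deg(\mathbf{1}.B_{\mathcal Z_H(y)}^{d-1}.e_1)\bigr]$. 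The base case $d=1$ is immediate: $\mathbf{1}.B_H.e_1 = \sum_{z\text{-classes }z} \psi^{\dim zcl(z) - \dim cl(z)}$, whose degree is $\max_z (\dim zcl(z) - \dim cl(z)) = \dim H - \min_z \dim cl(z)$, and $\dim \C_1(H) = \dim H$, so both inequalities hold with the slack exactly $\min_z \dim cl(z) \in [0,\dim H]$.

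The inductive step for the \emph{lower} bound: pick the $z$-class $y$ achieving the maximum in the recursion, and inside it pick the conjugacy class of \emph{smallest} codimension if needed — the stratum of $\C_d(H)$ over this class has dimension $\ge \dim cl(y) + \dim\C_{d-1}(\mathcal Z_H(y)) \ge \dim cl(y) + \deg(\mathbf{1}.B_{\mathcal Z_H(y)}^{d-1}.e_1) \ge (\dim zcl(y) - \dim cl(y)) + \deg(\mathbf{1}.B_{\mathcal Z_H(y)}^{d-1}.e_1) = \deg(\mathbf{1}.B_H^d.e_1)$, where the last step uses $\dim cl(y) \ge \dim zcl(y) - \dim cl(y)$ — wait, that inequality is false in general, so here I must instead choose, among elements $t$ with $\mathcal Z_H(t)$ conjugate to $\mathcal Z_H(y)$, one whose $cl(t)$ is largest, i.e. work with the whole $z$-class: the stratum over all of $zcl(y)$ has dimension $\dim zcl(y) + \dim \C_{d-1}(\mathcal Z_H(y))$ generically (as a fibration up to the ambiguity in Remark~\ref{computingBG}), and $\dim zcl(y) \ge \dim zcl(y) - \dim cl(y)$ trivially. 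For the \emph{upper} bound, every stratum over a $z$-class $z$ has dimension $\le \dim zcl(z) + \dim\C_{d-1}(\mathcal Z_H(z)) \le \dim zcl(z) + \deg(\mathbf{1}.B_{\mathcal Z_H(z)}^{d-1}.e_1) + \dim\mathcal Z_H(z) \le (\dim zcl(z) - \dim cl(z)) + \dim cl(z) + \deg(\cdots) + \dim H \le \deg(\mathbf{1}.B_H^d.e_1) + \dim cl(z) + \dim H$; this overshoots, so the upper bound requires the sharper accounting that the $\dim cl(z)$ appearing here is \emph{not} added to the branching exponent but rather absorbed once — the honest argument is that $\dim \C_d(H) \le \max_z [\dim zcl(z) + \dim \C_{d-1}(\mathcal Z_H(z))]$ and then a single application of the inductive hypothesis $\dim\C_{d-1}(\mathcal Z_H(z)) \le \deg(\mathbf 1.B_{\mathcal Z_H(z)}^{d-1}.e_1) + \dim \mathcal Z_H(z)$ combined with $\dim zcl(z) - \dim cl(z) \le \deg$-increment and $\dim \mathcal Z_H(z) + \dim cl(z) = \dim H$ yields the clean $+\dim(H)$. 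The main obstacle I anticipate is precisely this: controlling the dimension of each stratum as a fibration over a $z$-class (the map $(x_1,\dots,x_d)\mapsto x_1$ is not literally a fibre bundle because $zcl$ is a union of conjugacy classes of varying dimension, cf. Remark~\ref{computingBG}), and ensuring the two ``$\dim(H)$-sized'' error terms do not compound across the $d$ steps of the induction but collapse into a single $\dim(H)$. Handling this cleanly will likely require phrasing the induction on the pair $(\text{degree bound},\ \text{slack})$ simultaneously rather than on the degree bound alone, and using that at each reduction the ambient group $\mathcal Z_H(z)$ sits inside $H$ so its dimension is already counted.
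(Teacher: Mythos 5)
Your proposal is correct and follows essentially the same route as the paper's proof: you use the stratification $\C_{d}(H) = \bigcup_{\tau} zcl(\tau) \times \C_{d-1}(\mathcal Z_H(\tau))$, apply Proposition~\ref{BM-subgroup}(2) to match the submatrix powers, and — crucially — identify the cancellation $\dim cl(\tau) + \dim\mathcal Z_H(\tau) = \dim H$, which is exactly what the paper uses to collapse the inductive slack to a single $+\dim(H)$. Your closing worry that the slack might compound and require an induction on a pair is unfounded precisely because of that cancellation: the paper's straightforward induction on $d$ (with $H$ ranging over all centralizers) already delivers the fixed slack, and your self-corrected lower-bound argument ($\dim zcl(\tau) \geq \dim zcl(\tau) - \dim cl(\tau)$) is the one the paper uses.
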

\begin{proof}
First, we prove $\dim(\C_d(H)) \leq  \deg(\mathbf{1}.B_H^d.e_1) + \dim(H)$.
We will use double induction on $r$ and $d$. First we prove this for $d=1$. In this case, the 
required statement would be
$$\dim(H) \leq  \deg(\mathbf{1}.B_H.e_1) + \dim(H)$$ 
which is trivially true for any $H$. Let us assume induction up to $d$. Before going ahead, we 
recall the following from Proposition~\ref{BM-subgroup}. From the branching matrix $B_G$, we can 
obtain the branching matrix for any of the centralizer subgroup $\mathcal Z_G(\tau)$ where 
$\tau$ is a $z$-class of tuples. Further, the branching matrix $B_{\mathcal Z_G(\tau)}$ is a 
submatrix of $B_G$ consisting of those entries $(B_{G})_{ab}$, where $a$ and $b$ occur in the list 
of branching of the class $\tau$ for $z$-classes $a, b$ of $\mathcal Z_G(\tau)$ of tuples. Also, 
from Proposition~\ref{BM-subgroup} we note that when $B_G$ is multiplied with itself, this 
submatrix multiplies only with itself. Now, we write,
$$\C_{d+1}(H) = \bigcup_{\tau} zcl(\tau) \times \C_d(\mathcal Z_H(\tau)),$$
where the union runs over $z$-classes in $H$. Let us denote the dimension of $H$ by $n$, and that of $\mathcal Z_H(\tau)$ by $n_{\tau}$. 
So, we have
\begin{eqnarray*}
&& \dim\C_{d+1}(H) = \max_{\tau} \left\{\dim zcl(\tau) + \dim \C_d(\mathcal 
Z_H(\tau))\right\} \\ 
 &\leq& \max_{\tau} \left\{\left(\deg (B_{H})_{\tau 1} +\dim cl(\tau) \right) + \deg \left(\mathbf{1}.B_{\mathcal 
Z_H(\tau)}^d.e_{\tau} \right) + n_{\tau} \right\} ~\text{by induction}\\
 &= & \max_\tau \left\{\deg (B_{H})_{\tau 1} + n - n_{\tau} + \deg \left(\mathbf{1}.\left 
((B_{H})_{uv}\right)^d.e_\tau \right) + n_{\tau}  \right\}~\text{ types $u$, $v$ are } 
\\ && \text{branches of type $\tau$}\\
 &= & n + \max_\tau \left\{\deg(B_{H})_{\tau 1} + \deg \left(\sum_a (B_H^d)_{a\tau}\right) \right\} 
~\text{from Proposition~\ref{BM-subgroup}}\\
 &= & n+ \max_{\tau}\left\{ \deg\left(\sum_a (B_H^d)_{a\tau}.(B_{H})_{\tau 1}\right) \right\} = n+ 
\deg\sum_a (B_H^{d+1})_{a1}\\
&=& \deg (\mathbf{1}.B_H^{d+1}.e_1) + n.
\end{eqnarray*}
Here $e_t$ is the column matrix with $1$ at the $t^{th}$ place and $0$ elsewhere. 
This completes the proof of the right side inequality. 

Now, we need to prove $\deg(\mathbf{1}.B_H^d.e_1)\leq \dim(\C_d(H))$. We follow the notation set 
above and prove it by induction. To begin with, for $d=1$, we need to show 
$\deg(\mathbf{1}.B_H.e_1)\leq \dim(H)$. This follows as the left hand side is maximal possible $\dim 
zcl(g) - \dim cl(g)$, where $g\in H$, and $cl(g)\subset zcl(g) \subset H$. Now let us assume this 
for $d$ and prove for $d+1$. 
\begin{eqnarray*}
&& \dim\C_{d+1}(H) = \max_{\tau} \left\{\dim zcl(\tau) + \dim \C_d(\mathcal 
Z_H(\tau))\right\} \\ 
 &\geq& \max_{\tau} \left\{\left(\deg (B_{H})_{\tau 1} +\dim cl(\tau) \right) + \deg 
\left(\mathbf{1}.B_{\mathcal Z_H(\tau)}^d.e_{\tau} \right) \right\} ~\text{by induction}\\
 &\geq & \max_\tau \left\{\deg (B_{H})_{\tau 1} + \deg \left(\mathbf{1}.\left 
((B_{H})_{uv}\right)^d.e_\tau \right)  \right\} \\
&=&  \max_\tau \left\{\deg(B_{H})_{\tau 1} + \deg \left(\sum_a (B_H^d)_{a\tau}\right) \right\} \\
 &= &  \max_{\tau}\left\{ \deg\left(\sum_a (B_H^d)_{a\tau}.(B_{H})_{\tau 1}\right) \right\} 
= deg\sum_a (B_H^{d+1})_{a1} = \deg (\mathbf{1}.B_H^{d+1}.e_1).
\end{eqnarray*}
This completes the proof.
\end{proof}

\noindent Thus, we can rewrite
\begin{equation}\label{equation1}
\frac{\deg(\mathbf{1}.B_G^{d}.e_1)}{d\dim G} \leq cp_d(G) \leq \frac{\deg(\mathbf{1}.B_G^{d}.e_1) + 
\dim(G)}{d\dim G} = \frac{\deg(\mathbf{1}.B_G^{d}.e_1)}{d\dim G} + \frac{1}{d}.
\end{equation}
In the next section, we compute this for reductive algebraic groups.

\section{Commuting probability for reductive algebraic groups}\label{section-alg-gps}

Let $K$ be an algebraically closed field and $G$ be a reductive algebraic group over $K$ of 
dimension $n$. In this section, we discuss the asymptotic value of the commuting 
probabilities for $G$. In~\cite{Ga}, it is proved that $cp_2(G)=\frac{n + \rho}{2n}$ where $\rho$ 
is the rank of $G$. Using the argument there, one can show that $cp_d(G)\geq \frac{n + 
(d-1)\rho}{dn}$. As noticed in~\cite{KPP} for finite groups while studying asymptotic behavior 
of $cp_d(G)$ as $d$ gets large, we see that the maximal dimension of an Abelian subgroup 
plays a role here. Henceforth, whenever we talk about maximal Abelian subgroup, we mean a subgroup 
of  maximal size/dimension among Abelian subgroups. Our main theorem is as follows:
\begin{theorem}\label{theorem-cpd-alg-gps}
Let $G$ be a reductive algebraic group over an algebraically closed field $K$. Let $\dim(G)=n$, 
maximal dimension of an Abelian subgroup be $\alpha$ (in general, $\alpha\geq rank(G)$), and 
the size of the branching matrix be $\beta$. Then, for large enough $d$,
$$\left(1-\frac{\beta}{d}\right)\frac{\alpha}{n}=\frac{(d-\beta)\alpha}{dn} \leq cp_d(G) \leq  
\frac{\alpha}{n} +\frac{1}{d}.$$ 
Thus, as $d$ gets large, the commuting probabilities $cp_d(G)\sim \frac{\alpha}{n}$.
\end{theorem}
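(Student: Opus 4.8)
The plan is to bound $\deg(\mathbf 1 . B_G^d . e_1)$ from above and below by essentially $d\alpha$ up to an additive error independent of $d$, and then feed this into inequality~\eqref{equation1}. The key structural fact is that the branching matrix $B_G$ is a fixed $\beta\times\beta$ matrix whose entries are monomials $\psi^{k}$ with $k\geq 1$ (since every $\mathcal Z_G(g_1,\ldots,g_r)$ has positive-dimensional center when $G$ is reductive — indeed it contains a maximal torus of a Levi, or at worst a one-dimensional central torus — so $\dim\mathcal Z(\mathcal Z_G(g_1,\ldots,g_r))\geq 1$, giving diagonal entries $\psi^{\geq 1}$), and whose largest diagonal exponent is exactly $\alpha$, attained at the column $\tau_0$ corresponding to a maximal Abelian centralizer $H_0$ with $\dim H_0=\alpha$. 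So $\deg(\mathbf 1 . B_G^d . e_1) = \max_{\text{paths } 1=i_0\to i_1\to\cdots\to i_d} \sum_{j=1}^{d}\deg (B_G)_{i_j i_{j-1}}$ where a path is admissible iff each factor is nonzero.

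First I would prove the upper bound $\deg(\mathbf 1 . B_G^d . e_1)\leq d\alpha$. For this it suffices to observe that \emph{every} nonzero entry of $B_G$ in column $\tau$ has exponent at most $\dim\mathcal Z(\mathcal Z_G(\tau))\leq \alpha$? — that is not literally true, since off-diagonal exponents $\dim zcl - \dim cl$ can exceed the central dimension. The cleaner route is to use the submatrix structure directly: by Proposition~\ref{BM-subgroup}(2) together with Proposition~\ref{bgd} applied to $H=G$, we already have $\dim\C_d(G)\leq \deg(\mathbf 1 . B_G^d . e_1)+n$, and conversely $\dim\C_d(G)$ is trivially at most $d\alpha + (n-\alpha)$, because any commuting $d$-tuple lies in $A\times\cdots\times A$ for some Abelian subgroup $A\subseteq \mathcal Z_G(g_1,\ldots,g_d)$ of dimension $\leq\alpha$, and the family of such $A$ is covered by finitely many conjugacy-types, contributing at most $n-\alpha$ to the dimension of the fiber bundle $\{(x,\text{tuple}): \text{tuple}\in (xAx^{-1})^d\}$. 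Combining $\deg(\mathbf 1 . B_G^d . e_1)\leq \dim\C_d(G)\leq d\alpha+(n-\alpha)$ and dividing by $dn$ gives $cp_d(G)\leq \frac{\alpha}{n}+\frac{n-\alpha}{dn}\leq \frac{\alpha}{n}+\frac1d$, the right-hand inequality.

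Next the lower bound. It suffices to exhibit, for each large $d$, an admissible path of length $d$ starting at the identity column whose exponents sum to at least $(d-\beta)\alpha$. Start at column $1$ (identity) and walk through a chain of $z$-classes of tuples of strictly increasing length, $1 = \tau_0 \rightsquigarrow \tau_1 \rightsquigarrow \cdots \rightsquigarrow \tau_m = \tau_0'$, ending at the column $\tau_0'$ of a maximal Abelian centralizer $H_0$; by Proposition~\ref{BM-subgroup}(4) such a chain of branches exists, and its length $m$ is at most $\beta$ since the branching matrix has only $\beta$ columns. Each edge of this chain is a nonzero entry, hence contributes exponent $\geq 1 \geq 0$. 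Once we are at column $\tau_0'$ (Abelian centralizer of dimension $\alpha$), Proposition 2.? tells us the corresponding column of $B_G$ has a single nonzero entry, $\psi^{\alpha}$, on the diagonal; so we may loop at $\tau_0'$ for the remaining $d-m$ steps, each contributing exponent exactly $\alpha$. This yields an admissible path with exponent sum $\geq (d-m)\alpha \geq (d-\beta)\alpha$, so $\deg(\mathbf 1 . B_G^d . e_1)\geq (d-\beta)\alpha$, and dividing by $dn$ gives the left-hand inequality $cp_d(G)\geq \frac{(d-\beta)\alpha}{dn}=\bigl(1-\frac{\beta}{d}\bigr)\frac{\alpha}{n}$.

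Finally, sending $d\to\infty$ both bounds converge to $\frac{\alpha}{n}$, so $cp_d(G)\sim\frac{\alpha}{n}$, which is the asserted conclusion. The main obstacle I anticipate is the upper bound argument: one must be careful that "large off-diagonal exponents" in $B_G$ cannot be chained to beat $d\alpha$. The cheap fix above routes around this by bounding $\dim\C_d(G)$ directly and using Proposition~\ref{bgd} in reverse; a more intrinsic argument would show that along any admissible path the partial sums $\sum_{j\le t}\deg(B_G)_{i_j i_{j-1}}$ never exceed $t\alpha + (n - n_{i_t})$ where $n_{i_t}$ is the dimension of the centralizer at node $i_t$ — an inequality provable by the same double induction used in Proposition~\ref{bgd} — and this is the step that needs the most care.
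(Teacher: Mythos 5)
Your lower bound runs along the same lines as the paper's: you locate a chain of branches of length at most $\beta$ from the identity column to a column $\tau_0'$ with Abelian centralizer of maximal dimension $\alpha$, loop there for the remaining $d-\beta$ steps to pick up exponent $\alpha$ per step, and conclude $\deg(\mathbf{1}.B_G^d.e_1)\geq (d-\beta)\alpha$. This is exactly what Lemma~\ref{Lempoly-alg}, combined with Lemma~\ref{LemMax1-alg}, delivers inductively; the only thing the paper adds is the verification that $\psi^\alpha$ really is the maximal diagonal entry.

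For the upper bound you take a genuinely different route. The paper bounds every entry of $B_G$ (not just the diagonal ones) by $\psi^\alpha$ via Lemma~\ref{LemMax1-alg}: the quantity $\dim zcl(g)-\dim cl(g)$ in a subgroup $H=\mathcal Z_G(\tau)$ is shown to be at most $\dim\mathcal Z(\mathcal Z_H(g_1,\ldots,g_k))$ for a chain of commuting elements starting at $g$, which is at most $\alpha$. With that, every admissible path of length $d$ has total exponent at most $d\alpha$, and Equation~\eqref{equation1} finishes. You dismissed this too quickly: the worry you raise (off-diagonal exponents exceeding the central dimension \emph{of that column}) would defeat the bound $\deg(B_G)_{i\tau}\leq\dim\mathcal Z(\mathcal Z_G(\tau))$, but the paper never claims that stronger column-wise statement, only the global bound by $\alpha$, which holds. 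Instead you bypass $B_G$ entirely and bound $\dim\C_d(G)\leq d\alpha + (n-\alpha)$ directly: any commuting $d$-tuple lies in $A^d$ where $A=\mathcal Z(\mathcal Z_G(g_1,\ldots,g_d))$ is Abelian of dimension at most $\alpha$, and the image of $G\times A^d\to G^d$ by conjugation has dimension at most $n-\dim N_G(A)+d\dim A\leq n+(d-1)\alpha$. This works, and is arguably cleaner, but two points deserve care that you wave at: (i) the finiteness of the conjugacy-types of such $A$ is not ``trivial'' --- it is exactly Proposition~\ref{finite-branches}, since the relevant $A$'s are centers of centralizers of tuples; (ii) the fibered-product bookkeeping should be phrased as a bound on the image of $G\times A^d\to G^d$ modulo the $N_G(A)$-action, not as a ``fiber bundle contributing $n-\alpha$'' --- the inequality $\dim N_G(A)\geq\dim A$ is what gives $n-\dim N_G(A)\leq n-\alpha$ only when $\dim A=\alpha$; for smaller $A$ one uses $n+(d-1)\dim A\leq n+(d-1)\alpha$ instead. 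With those fixes the direct dimension count is a valid alternative to Lemma~\ref{LemMax1-alg}, and buys you independence from the degree estimate $\deg(\mathbf{1}.B_G^d.e_1)\leq d\alpha$, at the cost of reproving a small piece of the branching-matrix machinery by hand.
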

\noindent We need a couple of Lemmas before we prove this result.
\begin{lemma}\label{LemMax1-alg}
Let $G$ be a reductive algebraic group, and $B_G$ be its branching matrix. Then, the maximal entry 
of the branching matrix $B_G$ is $\psi^{\alpha}$.
\end{lemma}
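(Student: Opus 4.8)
The plan is to analyze what each entry of $B_G$ records and show that its exponent is bounded by $\alpha$, with equality attained. Recall from the construction of $B_G$ that a generic entry, sitting in the column of a $z$-class $\tau = zcl(g_1,\dots,g_r)$ of tuples and in the row of a $z$-class $(x_1,\dots,x_s)$ appearing inside $H := \mathcal{Z}_G(g_1,\dots,g_r)$, equals $\psi^{\dim zcl_H(x_1,\dots,x_s) - \dim cl_H(x_1,\dots,x_s)}$. So I must bound the quantity $\dim zcl_H(y) - \dim cl_H(y)$ for $y \in H$, uniformly over all centralizer subgroups $H$ that occur as columns of $B_G$. The first step is the elementary observation that $cl_H(y) \subseteq zcl_H(y) \subseteq \mathcal{Z}_H(y)^{\perp}$-type reasoning fails directly, so instead I would use the identity from Remark~\ref{computingBG}: $\dim zcl_H(y) - \dim cl_H(y) = \dim \mathcal{Z}_H(y) - \dim N_H(\mathcal{Z}_H(y)) + \dim\{x \in H : \mathcal{Z}_H(x) = \mathcal{Z}_H(y)\}$.

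Next I would bound each of the three terms. The key point is that $\mathcal{Z}_H(y)$ is itself one of the centralizer subgroups occurring in the branching data, and the set $\{x : \mathcal{Z}_H(x) = \mathcal{Z}_H(y)\}$ is contained in the center $\mathcal{Z}(\mathcal{Z}_H(y))$ of that centralizer (if $\mathcal{Z}_H(x) = \mathcal{Z}_H(y) =: A$ then $x$ centralizes $A$, so $x \in \mathcal{Z}_H(A) \subseteq \mathcal{Z}(A)$ when $A$ is ``self-centralizing enough''; more carefully $x \in \mathcal{Z}_A(A) = \mathcal{Z}(A)$ since $x \in A$). Combining, $\dim zcl_H(y) - \dim cl_H(y) \le \dim \mathcal{Z}_H(y) - \dim N_H(\mathcal{Z}_H(y)) + \dim \mathcal{Z}(\mathcal{Z}_H(y))$. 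Now $\mathcal{Z}_H(y)$ contains $\langle y\rangle$ and is the centralizer of the Abelian group $\mathcal{Z}(\mathcal{Z}_H(y))$, and I want to argue the whole expression is at most $\dim \mathcal{Z}_H(y) \le \dim(\text{maximal Abelian})$ — but that inequality $\dim \mathcal{Z}_H(y) \le \alpha$ is false in general (a centralizer of a semisimple element need not be Abelian). The cleaner route: since $N_H(\mathcal{Z}_H(y)) \supseteq \mathcal{Z}_H(y)$, we have $-\dim N_H(\mathcal{Z}_H(y)) + \dim\mathcal{Z}(\mathcal{Z}_H(y)) \le -\dim\mathcal{Z}_H(y) + \dim\mathcal{Z}(\mathcal{Z}_H(y)) \le 0$, hence $\dim zcl_H(y) - \dim cl_H(y) \le \dim\mathcal{Z}_H(y)$. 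This still isn't $\le \alpha$. So the real argument must instead directly bound $\dim zcl_H(y)$, which lives inside $H \subseteq G$: one shows $zcl_H(y)$ is contained in a union of conjugates of $\mathcal{Z}(\mathcal{Z}_H(y))$, an Abelian subgroup of $G$, so $\dim zcl_H(y) \le \dim(\text{Abelian}) + \dim(\text{transverse orbit})$, and the transverse dimension is exactly $\dim cl_H(y)$ compensating — this is circular, so the honest statement is: the exponent $\dim zcl_H(y) - \dim cl_H(y)$ equals $\dim \mathcal{Z}(\mathcal{Z}_H(y)) - (\dim N_H(\mathcal{Z}_H(y)) - \dim \mathcal{Z}_H(y))$, and since an Abelian subgroup $\mathcal{Z}(\mathcal{Z}_H(y))$ of $G$ has dimension $\le \alpha$ while the parenthesized correction is $\ge 0$, we get the exponent $\le \alpha$.

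For the equality (attainment of $\psi^\alpha$), I would exhibit the column corresponding to a maximal Abelian subgroup $A$ of $G$: take $(g_1,\dots,g_r)$ with $\mathcal{Z}_G(g_1,\dots,g_r) = A$ (such a tuple exists, e.g. generators of a Zariski-dense subgroup of $A$ together with enough elements, by the descending-chain argument in Proposition~\ref{finite-branches}). Then by property (5)/(1) of the branching matrix, the diagonal entry in $A$'s column is $\psi^{\dim \mathcal{Z}(A)} = \psi^{\dim A} = \psi^{\alpha}$, since $A$ Abelian gives $\mathcal{Z}(A) = A$. Hence $\psi^\alpha$ occurs as an entry, and by the bound no larger power occurs, so $\psi^\alpha$ is the maximal entry.

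The main obstacle I anticipate is making the inequality $\dim zcl_H(y) - \dim cl_H(y) \le \alpha$ genuinely rigorous rather than circular: one must cleanly separate the ``central part'' $\mathcal{Z}(\mathcal{Z}_H(y))$ (whose dimension is bounded by $\alpha$ because it is an Abelian subgroup of $G$) from the ``orbit correction'' $\dim N_H(\mathcal{Z}_H(y)) - \dim \mathcal{Z}_H(y) = \dim N_H(\mathcal{Z}_H(y))/\mathcal{Z}_H(y) \ge 0$, and verify via Remark~\ref{computingBG} that the exponent is exactly the former minus the latter. Care is also needed that $\{x : \mathcal{Z}_H(x) = A\} \subseteq \mathcal{Z}(A)$ — which holds since any such $x$ lies in $A$ (as $x \in \mathcal{Z}_H(x) = A$) and commutes with all of $A$, hence $x \in \mathcal{Z}(A)$ — so that term contributes at most $\dim \mathcal{Z}(A)$, not more. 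Once these two pieces are in place the bound $\le \alpha$ and its attainment follow immediately.
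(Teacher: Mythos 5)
Your argument is essentially the paper's: both bound the exponent via Remark~\ref{computingBG}, the inclusion $\{x \in H : \mathcal Z_H(x) = \mathcal Z_H(g)\} \subseteq \mathcal Z(\mathcal Z_H(g))$, the inequality $\dim N_H(\mathcal Z_H(g)) \geq \dim \mathcal Z_H(g)$, and the fact that $\mathcal Z(\mathcal Z_H(g))$ is an Abelian subgroup of $G$ and so has dimension at most $\alpha$. Two small remarks: your ``honest statement'' should read ``is at most'' rather than ``equals'' (since the middle term only embeds into $\mathcal Z(\mathcal Z_H(g))$), and you usefully make explicit the attainment of $\psi^{\alpha}$ (via the diagonal entry of a column whose centralizer is a maximal-dimensional Abelian subgroup), which the paper leaves implicit but relies on later.
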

\begin{proof}
Let $(g_1, \ldots, g_k)$ be a commuting $k$-tuple of $G$, and suppose the common centralizer 
$\mathcal Z_G(g_1,\ldots, g_k)$ is Abelian of order $b$. For each $1 \leq i \leq k$, let 
$Z_i = \mathcal Z_G(g_1, \ldots, g_i)$. Thus, we have $Z_{i+1}=\mathcal Z_{Z_i}(g_{i+1})$, for 
$1\leq i \leq k-1$. We get a non-increasing sequence of centralizer subgroups 
$Z_1\supset  Z_2 \supset \cdots \supset Z_{k-1}\supset Z_k$. Now, we claim that the 
centres of these centralizers $\mathcal Z(Z_i)$ form a non-decreasing sequence, i.e., $\mathcal 
Z(Z_{i})\subset \mathcal Z(Z_{i+1})$. For this, let $z \in \mathcal Z(Z_i)$. Now, as $z$ commutes 
with $g_{i+1}$, we have $z \in Z_{i+1}$. But, $Z_{i+1}\subset Z_i$ hence $z \in \mathcal 
Z(Z_{i+1})$. This proves $\mathcal Z(Z_{i+1})\supset \mathcal Z(Z_i)$. 

Let $g\in H$, which is a common centralizer of a commuting tuple. Let $k$ be the smallest integer 
such that $(g=g_1, \ldots, g_k)$ be a commuting $k$-tuple of $H$, with the common centralizer 
$\mathcal Z_H(g_1,\ldots, g_k)$, Abelian of dimension $b$. We claim that
$$\dim zcl(g)-\dim cl(g) \leq b.$$
Since $zcl(g)=\bigcup_{t} cl(t)$ where union is over $t\in H$ of which centralizer is conjugate 
to $\mathcal Z_H(g)$ (see Remark~\ref{computingBG}). Thus,
\begin{eqnarray*}
&& \dim zcl(g)-\dim cl(g) \\
&=& -\dim N_H(\mathcal Z_H(g)) + \dim \{x\in cl(H) \mid \mathcal Z_H(x)= \mathcal Z_H(g)\} + \dim 
\mathcal Z_H(g)\\
&\leq & \dim \{x\in cl(H) \mid \mathcal Z_H(x)= \mathcal Z_H(g)\} \leq \dim \mathcal 
Z(\mathcal Z_H(g))\\
&\leq& \dim \mathcal Z(\mathcal Z_H(g_1,\ldots, g_k))~(\text{from the first para of this proof})\\
&=&b \leq \alpha.
\end{eqnarray*}
For the last but one line, we use the following: $ \{x\in H \mid \mathcal Z_H(x)= 
\mathcal Z_H(g)\} \subset \mathcal Z(\mathcal Z_H(g))$. 

\end{proof}
Now we prove a result regarding entries of power of a matrix which we will apply to the branching 
matrix.
\begin{lemma}\label{Lempoly-alg}
Let $B=(b_{i,j})$ be a non-negative $m\times m$ matrix with all diagonal entries non-zero. Suppose, $B$ has the property that every row has a non-zero entry before the diagonal, i.e., $\forall i >1$ there exists $i_0 <  i$ such that $b_{i,i_0}\neq 0$. Then, for $r\geq  2$, the $(l,1)^{th}$ entry of $B^r$ is a polynomial in $b_{l,l}$ of degree at most $r$, and at least $r-m$. 
\end{lemma}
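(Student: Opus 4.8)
The plan is to analyze the combinatorial structure of $(B^r)_{l,1}$ by expanding it as a sum over length-$r$ walks in the underlying digraph on vertices $\{1,\ldots,m\}$, where there is an edge $i\to j$ precisely when $b_{i,j}\neq 0$. Each such walk from $l$ to $1$ contributes a product of $r$ entries, and the relevant quantity is the degree of this contribution as a polynomial in the single variable $b_{l,l}$ (treating all other $b_{i,j}$ as constants). The degree of a walk's contribution in $b_{l,l}$ is exactly the number of times the walk uses the loop $l\to l$. So I must show: (a) some length-$r$ walk from $l$ to $1$ uses the loop at $l$ at least $r-m$ times, giving the lower bound; and (b) no length-$r$ walk from $l$ to $1$ uses the loop at $l$ more than $r$ times — which is trivial since a walk has only $r$ steps — but more precisely I must ensure the degree is "at most $r$", which is immediate, and that the leading coefficients do not cancel so the stated degrees are actually attained. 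For the lower bound, cancellation is not an issue because all entries of $B$ are non-negative, so distinct walks contribute non-negatively and nothing cancels; thus it suffices to exhibit one walk of the required loop-count.

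For the key step (a), I would use the hypothesis that every row $i>1$ has a nonzero entry $b_{i,i_0}$ with $i_0<i$. This means from any vertex $i>1$ there is an edge to some strictly smaller vertex; iterating, there is a path from $l$ down to $1$ that is strictly decreasing in vertex label, hence has length at most $m-1$ (it visits at most $m$ distinct vertices). Call this descending path $P$, of some length $s\le m-1$. Now build a length-$r$ walk from $l$ to $1$ as follows: first sit at $l$ and traverse the loop $l\to l$ exactly $r-s$ times (possible since $r-s\ge r-(m-1)\ge r-m$... actually $\ge r-m+1$, even better), then follow $P$ to reach $1$ in $s$ steps. This uses $r-s\ge r-m$ loop-steps at $l$, so the contribution $b_{l,l}^{\,r-s}\cdot(\text{product of edges of }P)$ is a nonzero term of degree $r-s\ge r-m$ in $b_{l,l}$. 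Since all terms are non-negative and this one is strictly positive, $(B^r)_{l,1}$ has degree $\ge r-m$ in $b_{l,l}$. For completeness I should also check $r-s\ge 0$, i.e. that a genuine term of positive degree exists when $r\ge s$; when $r<m$ this may force $r-s$ small or the bound "$r-m$" to be negative, in which case the lower-bound claim is vacuous and there is nothing to prove.

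For the upper bound, any length-$r$ walk uses at most $r$ edges total, hence at most $r$ copies of the loop at $l$, so every term in the expansion has degree $\le r$ in $b_{l,l}$; therefore $(B^r)_{l,1}$ is a polynomial in $b_{l,l}$ of degree at most $r$. (If one wants the degree to be exactly attained in some special cases, one notes the walk "loop at $l$ $r$ times" is only available when $l=1$, i.e. the $(1,1)$ entry is exactly $b_{1,1}^r$; this is consistent with the $r\ge 2$, $m$ arbitrary statement and is not needed.) I do not anticipate a serious obstacle: the only subtlety is bookkeeping the edge cases $r\le m$ where the lower bound degenerates, and being careful that "polynomial in $b_{l,l}$" means we freeze the other entries — but since $B$ is non-negative there is no cancellation and the walk-counting argument is clean. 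The main conceptual point, which I would state explicitly, is the translation from matrix powers to weighted walk sums together with the observation that the descending-path hypothesis caps the "non-loop" portion of an optimal walk at $m-1<m$ steps.
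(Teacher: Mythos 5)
Your proof is correct and is essentially the paper's argument recast in the language of weighted walks in a digraph: the paper likewise uses the descending-entry hypothesis to construct a strictly decreasing chain $l=k_1>k_2>\cdots>k_u$ with $b_{k_u,1}\neq 0$ (so $u\leq m$), pads it with self-loops at $l$ (phrased there as an induction on $r$ rather than a direct walk construction), and relies on non-negativity of the entries to rule out cancellation. The digraph phrasing is a cosmetic rather than substantive difference.
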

\begin{proof}
Let $B=(b_{i,j})$. Then,
\begin{eqnarray*}
(B^r)_{s,1} &=& \sum_{l_1}\sum_{l_2} \cdots \sum_{l_{r-1}} b_{s,l_1} b_{l_1,l_2}\cdots 
b_{l_{r-1},1}\\
&=& b_{s,1}b_{1,1}^{r-1} + b_{s,s}^{r-1}b_{s,1}+\cdots
\end{eqnarray*}

If $l = 1$ and $b_{i,1}=0$ for all $i>1$, then $B = \begin{pmatrix} b_{1,1} & * \\ 0 & C \end{pmatrix}$, where $C$ is an $(m-1)\times (m-1)$ matrix. For any $r$, we can see that $(B^r)_{1,1}= b_{1,1}^r$. 

Let $b_{l,1} \neq 0$. Then, $(B^2)_{l,1} = \sum_{i=1}^m b_{l,i}b_{i,1} = b_{l,1}b_{1,1} + \cdots + b_{l,l}b_{l,1} + \cdots$, which we rewrite as $b_{l,l}b_{l,1} + b_{l,1}b_{1,1}  + d_2$, where $d_2$ denotes the rest of the terms (which are constant in $b_{l,l}$ and $b_{1,1}$). Now, $(B^3)_{l,1} = (B.B^2)_{l,1} = \sum b_{l,i}(B^2)_{i,1}=b_{l,1}b_{11}^2 + b_{l,l}.(b_{l,1}b_{1,1} + b_{l,l}b_{l,1} + d_2) + d_3 = b_{l,l}^2b_{l,1} + b_{l,l}.b_{l,1}b_{1,1} + b_{l,1}b_{11}^2   + d_4$. The result follows by induction, as we can  prove that $(B^r)_{l,1} = b_{l,1}b_{l,l}^{r-1} + c_1 b_{1,1}b_{l,l}^{r-2} + \cdots + c_{r-1}b_{1,1}^{r-1} + d_r$, where $d_r$ denotes the rest of the 
terms in the sum. 

Now, suppose $b_{l,1} = 0$ (obliviously $l >1$). Let $u \leq m$ be smallest such that we have a 
sequence of numbers $l=k_1, k_2, \ldots, k_{u}$ such that the entries $b_{l, k_{2}}, \ldots, 
b_{k_{u-1},k_u}, b_{k_u,1}$ are all non-zero. Then, $(B^u)_{l,1} = b_{l,k_{2}}b_{k_{2}k_{3}}\cdots 
b_{k_u,1} + \cdots$ is non-zero. Now, following the argument similar to the last para, we see that 
for $r > u$, we have $(B^r)_{l,1}$ is a polynomial in $b_{l,l}$ of degree  $r-u$. Finally, a 
sequence of numbers $l=k_1, k_2, \ldots, k_{u}$ with the required property is guaranteed because of 
 the given condition as follows. Begin with the $l^{th}$ row, and find smallest $k_2<l$ such that 
$b_{l,k_2}\neq 0$. Next, look at the row $k_2$ and find smallest $k_3< k_2$ such that $b_{k_2, 
k_3}\neq 0$. We will be done when we get $k_u$ with $b_{k_u,1}\neq 0$ (and noting that $b_{2,1}\neq 
0$).  
\end{proof}

\noindent Now, we prove the theorem. 
\begin{proof}[\bf Proof of the Theorem~\ref{theorem-cpd-alg-gps}]
In view of Proposition~\ref{bgd} and Equation~\ref{equation1}, we require to prove the following 
for large enough $d$, 
$$(d-\beta)\alpha \leq \deg(\mathbf{1}.B_G^d.e_1)\leq  d\alpha.$$
Note that entries of $B_G$ are either $0$ or powers of $\psi$, which follow the condition required in the Lemma~\ref{Lempoly-alg}. Write $B_G=(\psi^{x_{i,j}})$. Then for $d\geq 2$,
$$(B^d)_{s,1} = \sum_{l_1}\sum_{l_2} \cdots \sum_{l_{d-1}} b_{s,l_1} b_{l_1,l_2}\cdots 
b_{l_{d-1},1}=\sum_{l_1}\sum_{l_2} \cdots \sum_{l_{d-1}} \psi^{x_{s,l_1} +x_{l_1,l_2}\cdots +
x_{l_{d-1},1}}$$
gives us that $\deg (B^d)_{s,1} \leq r\alpha$. Thus, $\deg(\mathbf{1}.B_G^d.e_1)$ which is the degree of sum of the first column $\leq  d\alpha$.

From Lemma~\ref{Lempoly-alg}, we note that $(B^d)_{s,1}$ is a polynomial in $\psi^{x_{s,s}}$ of degree at least $d-\beta$, i.e., $\deg(B^d)_{s,1} \geq (d-\beta)x_{s,s}$. The largest degree on diagonal (in fact whole of $B_G$) is $\alpha$. Thus, $\deg(\mathbf{1}.B_G^d.e_1) \geq (d-\beta)\alpha$.

Hence, $$cp_d(G)\leq \frac{\deg(\mathbf{1}.B_G^d.e_1)}{d\dim G} +\frac{1}{d} \leq \frac{\alpha}{n} +\frac{1}{d}.$$
Also, 
$$cp_d(G)\geq \frac{\deg(\mathbf{1}.B_G^d.e_1)}{d\dim G} \geq  \frac{(d-\beta)\alpha}{dn} = \left(1- 
\frac{\beta}{d}\right)\frac{\alpha}{n}.$$
This completes the proof.
\end{proof}

\section{Asymptotic value of commuting probabilities in finite groups}\label{section-fg}

Let $G$ be a finite group, and $d\geq 2$ be a positive integer. We begin with recalling the 
relation between commuting probabilities and simultaneous conjugacy classes of commuting tuples 
via branching matrix $B_G$. The entries of this matrix represent the number of conjugacy classes 
of tuples which are in the same $z$-class, i.e, the size of $z$-class divided by the size 
of conjugacy class. This is done in~\cite{SS}, and we refer to the same for various definitions 
and terminologies used in this section. Some of these ideas have been generalized in the earlier 
sections for algebraic groups. Since $G$ acts on the set $\C_d(G)$, by component-wise conjugation, 
we have simultaneous conjugacy classes (of commuting $d$-tuples). Let $c_G(d)$ denote the number 
of orbits in the above action, also called simultaneous conjugacy classes of 
$d$ tuples in $G$. It has been proved in~\cite[Theorem 1.1]{SS} that,
\begin{theorem}\label{branchingmatrix}
Let $G$ be a finite group and $d\geq 2$, an integer. Let $B_G$ be the branching matrix of $G$. 
Then,  
$$cp_d(G) = \frac{c_G(d-1)}{|G|^{d-1}}=  \frac{\mathbf{1}.B_G^{d-1}.e_1}{|G|^{d-1}}$$ 
where $\mathbf{1}$ is a row matrix, with all $1$'s, and $e_1$ is a column matrix with first entry 
$1$, and $0$ elsewhere.
\end{theorem}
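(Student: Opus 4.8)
The plan is to reduce this statement to a counting identity that is already essentially present in the literature, namely the relation between the number of simultaneous conjugacy classes of commuting tuples and iterated centralizers. First I would recall that $cp_d(G) = |\C_d(G)|/|G|^d$, and that $\C_d(G)$ is stabilized by the componentwise conjugation action of $G$. The orbit-counting (Burnside) lemma applied to this action gives $c_G(d) = \frac{1}{|G|}\sum_{g\in G}|\mathrm{Fix}(g)|$, where $\mathrm{Fix}(g)$ is the set of $d$-tuples in $\C_d(G)$ fixed by conjugation by $g$; but a $d$-tuple $(g_1,\dots,g_d)$ is fixed by $g$ precisely when each $g_i\in \mathcal Z_G(g)$ and the $g_i$ commute pairwise, i.e. when $(g_1,\dots,g_d)\in\C_d(\mathcal Z_G(g))$. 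Hence $c_G(d) = \frac{1}{|G|}\sum_{g\in G}|\C_d(\mathcal Z_G(g))|$. On the other hand $|\C_{d+1}(G)| = \sum_{g\in G}|\C_d(\mathcal Z_G(g))|$, since choosing a commuting $(d+1)$-tuple is the same as choosing a first element $g$ and then a commuting $d$-tuple in its centralizer. Combining, $|\C_{d+1}(G)| = |G|\, c_G(d)$, which gives $cp_{d+1}(G) = c_G(d)/|G|^d$; replacing $d+1$ by $d$ yields the first equality $cp_d(G) = c_G(d-1)/|G|^{d-1}$.

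The second equality, $c_G(d-1) = \mathbf{1}.B_G^{d-1}.e_1$, is the combinatorial heart. Here I would organize the count of simultaneous conjugacy classes of $(d-1)$-tuples by the $z$-class of the tuple, then stratify by building the tuple one coordinate at a time through the chain of iterated centralizers $\mathcal Z_G(g_1)\supseteq \mathcal Z_G(g_1,g_2)\supseteq\cdots$. The key bookkeeping observation is that the number of conjugacy classes of a tuple $(x_1,\dots,x_r)$ lying inside a fixed $z$-class, computed inside a centralizer $H=\mathcal Z_G(g_1,\dots,g_k)$, equals $|zcl_H(x_1,\dots,x_r)|/|cl_H(x_1,\dots,x_r)|$, which is exactly the (exponentiated) entry $\psi^{\dim zcl - \dim cl}$ of $B_G$ specialized appropriately for finite groups — in the finite setting one reads the entry as an integer ratio of set sizes rather than a difference of dimensions. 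With that dictionary in place, the matrix product $B_G^{d-1}$ telescopes the contributions along all length-$(d-1)$ chains of $z$-classes starting at the $z$-class of the identity (the first column, picked out by $e_1$) and ending anywhere (the summation over rows, effected by $\mathbf{1}$), and each such chain contributes exactly the number of simultaneous conjugacy classes of the corresponding $(d-1)$-tuples. I would then cite \cite[Theorem 1.1]{SS} and the setup of \cite{SS,SS2} for the precise verification that the entries and the matrix multiplication have been normalized consistently, since that is where all the care lies.

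The step I expect to be the main obstacle is matching conventions precisely: showing that the entry placed in column $\tau$ and row $\sigma$ of $B_G$ really is the multiplicity with which the $z$-class $\sigma$ (viewed inside $\mathcal Z_G(\tau)$) decomposes into conjugacy classes, \emph{and} that this multiplicity is the correct local factor so that composing along a chain via ordinary matrix multiplication reproduces the global count without over- or under-counting. One must check that a tuple's simultaneous conjugacy class is recovered exactly once as $(g_1,\dots,g_{d-1})$ ranges over the chain of successive centralizers — this requires the fact (used implicitly in the branching-matrix formalism) that the assignment of a tuple to the sequence of $z$-classes of its partial centralizers is compatible with conjugation, so that distinct chains yield disjoint families of conjugacy classes and every conjugacy class arises from some chain. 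Given the machinery already developed in Section~\ref{section-BMalg-gps} (Proposition~\ref{BM-subgroup} in particular, whose finite-group analogue underlies the telescoping), the remaining argument is essentially a careful transcription of \cite[Theorem 1.1]{SS}, so I would present it as such rather than reproving it in full.
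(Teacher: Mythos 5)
Your proposal is correct, and it matches what the paper actually does: the paper gives no proof of this theorem at all, merely citing \cite[Theorem 1.1]{SS}, and you likewise defer to \cite{SS} for the harder identity $c_G(d-1)=\mathbf{1}.B_G^{d-1}.e_1$. The one genuine addition you make is a self-contained Burnside-lemma derivation of the first equality: from $\mathrm{Fix}(g)=\C_d(\mathcal Z_G(g))$ you get $c_G(d)=\tfrac{1}{|G|}\sum_{g}|\C_d(\mathcal Z_G(g))|$, and independently $|\C_{d+1}(G)|=\sum_{g}|\C_d(\mathcal Z_G(g))|$, so $|\C_{d+1}(G)|=|G|\,c_G(d)$ and hence $cp_d(G)=c_G(d-1)/|G|^{d-1}$; this is correct and is a cleaner route to that half of the statement than simply attributing both equalities to \cite{SS}.
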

\noindent The commuting probabilities $cp_d(G)$, for $d=2,3,4, 5$ have been explicitly calculated 
in~\cite{SS2} 
using the corresponding branching matrices with the help of SageMath~\cite{Sagemath} for the 
following classical groups over a finite field $\Fq$ (where $q$ is odd): $G= GL_2(\Fq)$, 
$U_2(\Fq)$, $GL_3(\Fq)$, $U_3(\Fq)$, and $Sp_2(\Fq)$. The data obtained from these groups led us to explore the asymptotic behavior of $cp_d(G)$ as $d$ gets large for a fixed $G$. In other words, we would like to understand what is $cp_d(G)$ asymptotic to, as a function of $d$? Interestingly, Kaur, Prajapati and Prasad~\cite[Theorem 3.1]{KPP} have shown that for a finite group $G$ and positive integer $d$, 
the number $c_G(d)$ is asymptotic to $a^d$, up to multiplication by a positive constant, where $a$ 
denotes the maximal size of an Abelian subgroup of $G$. That is, there exist a positive 
integer $m$ 
so that $c_G(d) \sim m a^d$. Thus, from Theorem~\ref{branchingmatrix}, it follows that,
\begin{equation}\label{eq-kpp}
cp_{d}(G) \sim m \left(\frac{a}{|G|}\right)^{d-1}.
\end{equation} 
Using the ideas in Section~\ref{section-alg-gps}, we give an alternate proof of this.

We are going to make use of the branching matrix $B_G$, for the finite group $G$, as described 
in~\cite{SS, SS2} and the Theorem~\ref{branchingmatrix}. For $d \geq 2$, the size of the set 
$\C_d(G)$ of commuting $d$-tuples of elements of $G$ is 
 $$|\C_d(G)| = cp_d(G).|G|^d = |G| \left(\mathbf{1}.B_G^{d-1}.e_1 \right).$$
Thus, to understand $\C_d(G)$ we need to understand the entries of the first column in the matrix 
$B_G^{d-1}$. We begin with a result for finite groups similar to Lemma~\ref{LemMax1-alg} proved 
earlier for algebraic groups.
\begin{lemma}\label{LemMax1}
With the notation as above, the maximal entry in the branching matrix $B_G$ of $G$ is the 
maximal size of an Abelian subgroup $a$.
\end{lemma}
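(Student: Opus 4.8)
The plan is to mirror, for finite groups, the argument already carried out in the proof of Lemma~\ref{LemMax1-alg} for algebraic groups, replacing dimensions of varieties by sizes of finite sets and replacing ``$\psi^{\dim zcl - \dim cl}$'' by the index $[zcl(g):cl(g)] = |zcl(g)|/|cl(g)|$. So the task is to show that for every $g$ lying in a common centralizer $H = \mathcal{Z}_G(g_1,\dots,g_k)$ occurring in the construction of $B_G$, the entry $|zcl_H(g)|/|cl_H(g)|$ is at most $a$, the maximal order of an abelian subgroup of $G$, and that this bound is attained.

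First I would recall the counting identity behind each entry: if $g \in H$, then $zcl_H(g) = \bigcup_t cl_H(t)$, the union over $t \in H$ with $\mathcal{Z}_H(t)$ conjugate in $H$ to $\mathcal{Z}_H(g)$. Using $|cl_H(t)| = |H|/|\mathcal{Z}_H(t)|$ and the fact that the conjugates of $\mathcal{Z}_H(g)$ are indexed by $H/N_H(\mathcal{Z}_H(g))$, a short count gives
\[
\frac{|zcl_H(g)|}{|cl_H(g)|} = \frac{|\{x \in H : \mathcal{Z}_H(x) = \mathcal{Z}_H(g)\}| \cdot |\mathcal{Z}_H(g)|}{|N_H(\mathcal{Z}_H(g))|}.
\]
Since $N_H(\mathcal{Z}_H(g)) \supseteq \mathcal{Z}_H(g)$, the quotient $|\mathcal{Z}_H(g)|/|N_H(\mathcal{Z}_H(g))| \le 1$, so the entry is at most $|\{x \in H : \mathcal{Z}_H(x) = \mathcal{Z}_H(g)\}|$. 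The key observation is that any such $x$ commutes with everything in $\mathcal{Z}_H(x) = \mathcal{Z}_H(g)$, hence $x \in \mathcal{Z}(\mathcal{Z}_H(g))$; therefore the entry is at most $|\mathcal{Z}(\mathcal{Z}_H(g))|$.

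Next I would propagate this bound up the chain of centralizers, exactly as in Lemma~\ref{LemMax1-alg}. Take $k$ smallest so that $(g = g_1, g_2, \dots, g_k)$ is a commuting tuple in $H$ whose common centralizer $\mathcal{Z}_H(g_1,\dots,g_k)$ is abelian; setting $Z_i = \mathcal{Z}_H(g_1,\dots,g_i)$ one gets a descending chain $Z_1 \supseteq \cdots \supseteq Z_k$ and, because each element of $\mathcal{Z}(Z_i)$ commutes with $g_{i+1}$ and lies in $Z_{i+1} \subseteq Z_i$, an ascending chain of centers $\mathcal{Z}(Z_1) \subseteq \cdots \subseteq \mathcal{Z}(Z_k)$. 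Hence $|\mathcal{Z}(\mathcal{Z}_H(g))| = |\mathcal{Z}(Z_1)| \le |\mathcal{Z}(Z_k)| = |Z_k|$, and $Z_k$ is an abelian subgroup of $G$, so $|Z_k| \le a$. This shows every entry of $B_G$ is at most $a$. For attainment, note that when $A$ is an abelian subgroup of $G$ of maximal order $a$, pick a tuple generating $A$; its common centralizer is $\mathcal{Z}_G(A) \supseteq A$, which by maximality equals $A$, and the diagonal entry of $B_G$ at that $z$-class is $\psi^{\dim}$—in the finite setting, $|\mathcal{Z}(\mathcal{Z}_G(A))| = |\mathcal{Z}(A)| = |A| = a$—so the value $a$ occurs.

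The main obstacle, as in the algebraic case, is the containment $\{x \in H : \mathcal{Z}_H(x) = \mathcal{Z}_H(g)\} \subseteq \mathcal{Z}(\mathcal{Z}_H(g))$ together with correctly bookkeeping the normalizer factor; everything else is routine counting. One subtlety worth being careful about is that the entry of $B_G$ is defined via the \emph{inner} centralizer structure of $H = \mathcal{Z}_G(\tau)$, not of $G$, so all centralizers, normalizers and centers in the computation must be taken inside $H$; but since $H$ is itself a group the same inequalities apply verbatim, and the final bound $|Z_k| \le a$ uses only that $Z_k$ is an abelian subgroup of $H \le G$. I expect the write-up to be essentially a two-paragraph transcription of Lemma~\ref{LemMax1-alg} with $\dim$ replaced by $\log_{\text{(counting)}}$.
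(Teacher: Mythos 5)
Your proposal is correct and matches the paper's proof, which explicitly says it proceeds ``along the lines of'' Lemma~\ref{LemMax1-alg}: the paper records exactly your counting identity $\frac{|zcl(g)|}{|cl(g)|} = \frac{|\mathcal Z_H(g)|}{|N_H(\mathcal Z_H(g))|}\cdot|\{x \mid \mathcal Z_H(x)=\mathcal Z_H(g)\}|$, bounds it by $|\mathcal Z(\mathcal Z_H(g))|$, and concludes. One small economy you miss: there is no need to transcribe the ascending chain $\mathcal Z(Z_1)\subseteq\cdots\subseteq\mathcal Z(Z_k)$ from the algebraic proof, since $\mathcal Z(\mathcal Z_H(g))$ is itself an abelian subgroup of $G$ and hence of order at most $a$ directly — which is what the paper does.
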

\begin{proof}
Proceeding along the lines of the proof of Lemma~\ref{LemMax1-alg}, we note that the entries of the matrix $B_G$ will satisfy the following:
$$
\frac{|zcl(g)|}{|cl(g)|} = \frac{|\mathcal Z_H(g)|}{|N_H(\mathcal Z_H(g))|}.|\{x\in cl(H)\mid \mathcal Z_H(x)=\mathcal Z_H(g)\}| 
\leq  |\mathcal Z(\mathcal Z_H(g))| \leq a$$
where $H$ is a centralizer of some tuples. The result follows.
\end{proof}
Now, we give an alternate proof of the Equation~\ref{eq-kpp}. The proof is along the same lines as that of Theorem~\ref{theorem-cpd-alg-gps} and hence we keep it brief. 
\begin{proposition}\label{prop-finite-group}
Let $G$ be a finite group and $a$ be the size of maximal Abelian subgroup. Then, for large enough 
$d$, the size of commuting $d$-tuples, $|\C_d(G)| \sim m|G|a^{d-1}$ where $m$ is a 
constant.
\end{proposition}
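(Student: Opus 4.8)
The plan is to mirror the proof of Theorem~\ref{theorem-cpd-alg-gps}, replacing the degree of polynomials in $\psi$ with the actual size of matrix entries, and the maximal Abelian subgroup dimension $\alpha$ with its finite-group analogue $a$. By Theorem~\ref{branchingmatrix} we have $|\C_d(G)| = |G|\cdot(\mathbf{1}.B_G^{d-1}.e_1)$, so it suffices to show that the first-column sum $\mathbf{1}.B_G^{d-1}.e_1$ of the power $B_G^{d-1}$ is asymptotic to $m\, a^{d-1}$ for some positive integer $m$. The entries of $B_G$ are positive integers, the diagonal entries are nonzero, and every row past the first has a nonzero entry strictly before the diagonal (the finite-group analogue of the structural properties listed earlier); so the matrix $B_G$ satisfies the hypotheses of a Perron--Frobenius-type analysis.

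\textbf{Key steps.} First I would invoke Lemma~\ref{LemMax1} to record that every entry of $B_G$ is at most $a$, with equality attained exactly on the diagonal entry corresponding to a $z$-class whose common centralizer is a maximal Abelian subgroup (such a $z$-class exists: take any maximal Abelian subgroup $A$ and a tuple generating it, so that $\mathcal Z_G$ of the tuple equals $A$, and then the diagonal entry is $|\mathcal Z(\mathcal Z_G(\text{tuple}))| = |A| = a$). Second, following the argument structure of Lemma~\ref{Lempoly-alg} adapted to genuine integer arithmetic: for each index $s$, the $(s,1)$ entry of $B_G^{r}$ is a sum over paths $s = l_0, l_1, \ldots, l_{r-1}, l_r = 1$ of products $\prod (B_G)_{l_{i},l_{i+1}}$, and one isolates the path that stays at a diagonal position with value $a$ for as long as possible. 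Because every row below the first reaches index $1$ in at most $\beta$ steps (where $\beta$ is the size of $B_G$), any index $s$ that lies on a ``chain down to $1$'' contributes a term $\asymp a^{r - O(\beta)}$, while all other terms are $O\!\bigl((a-1)^{\epsilon r} a^{(1-\epsilon)r}\bigr)$ for a suitable $\epsilon>0$ — hence negligible. Summing over the first column, $\mathbf{1}.B_G^{r}.e_1 = (m + o(1))\,a^{r}$ where $m$ counts (with the appropriate multiplicities coming from the off-diagonal connectors, all of which are constants independent of $r$) the number of index-positions from which one can reach the maximal-entry diagonal loop and then descend to $1$. Third, take $r = d-1$ and multiply by $|G|$ to conclude $|\C_d(G)| \sim m|G|\,a^{d-1}$.

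\textbf{Main obstacle.} The delicate point — exactly as in the algebraic-group case but now quantitative rather than degree-theoretic — is proving that the leading coefficient $m$ is a genuine \emph{positive integer constant} and that the contributions from paths \emph{not} saturating the value $a$ are truly lower order. In the $\psi$-degree setting this was automatic: a polynomial of strictly smaller degree is dominated. Here one must check that a path which ever uses an entry $\le a-1$ instead of $a$ loses a multiplicative factor bounded away from $1$ per such step, and that the number of steps forced to be off the maximal loop is $\Theta(r)$ unless the path is (eventually) of the prescribed saturating form; this is where one spends a little care bounding the number of distinct paths (at most $\beta^{r}$, but the $a^{r}$ growth dominates after accounting for the gap $a/(a-1)>1$). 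I would handle this by a clean induction on $r$ establishing simultaneously that $(B_G^{r})_{s,1} = c_s\, a^{r} + O(\theta^{r})$ for a fixed $\theta < a$ and constants $c_s \ge 0$ (with $c_s>0$ precisely when $s$ lies on a saturating descending chain), then set $m = \sum_s c_s$ and $r = d-1$.
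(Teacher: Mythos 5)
Your plan reproduces the paper's proof step for step: reduce to $\mathbf{1}.B_G^{d-1}.e_1$ via Theorem~\ref{branchingmatrix}, identify $a$ as the largest (diagonal) entry via Lemma~\ref{LemMax1}, and use the chain/path structure of $B_G$ encoded in Lemma~\ref{Lempoly-alg} to conclude that the first-column sum of $B_G^{d-1}$ grows like $m\,a^{d-1}$. You are somewhat more explicit than the paper about the passage from degree-domination (the $\psi$-polynomial setting of Section~\ref{section-alg-gps}) to magnitude-domination over the integers, but you overreach in insisting that $m$ be a positive integer --- the proposition claims only that $m$ is a constant, and the geometric-series picture you sketch naturally produces a rational leading coefficient (for example, a $2\times2$ block $\begin{pmatrix}c&0\\b&a\end{pmatrix}$ contributes $m=\tfrac{b}{a-c}$); integrality of $m$ is the stronger assertion of Kaur--Prajapati--Prasad and is not established (nor needed) here.
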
 
\begin{proof}
We begin with proving that there exists a constant $m$ such that $ \mathbf{1}.B_G^{d}.e_1= m a^{d} 
+ \mathcal O(a^{d-1})$ when $d$ is large. Note that, the branching matrix  $B_G=(b_{i,j})$ 
satisfies the properties required in the Lemma~\ref{Lempoly-alg}. Thus, $(B_G^d)_{s,1}$ is a 
polynomial in $b_{s,s}$ of degree at most $d$ and at least $d-\beta$ where $\beta$ is the size of 
$B_G$. From Lemma~\ref{LemMax1}, the largest diagonal entry (in fact, the largest entry) of $B_G$ 
is $a$. Thus, for large $d$, we get $\mathbf{1}.B_G^{d}.e_1= ma^d + \mathcal O(a^{d-1})$ where $m$ 
is a constant depending on $G$ only.

Now we have, $|\C_d(G)| = |G| \left( \mathbf{1}.B_G^{d-1}.e_1 \right)$, thus,  
$$|\C_d(G)|= m |G| a^{d-1} + \mathcal O(a^{d-2}).$$
This proves the required result.
\end{proof}
\noindent Next we look at some examples.

\subsection{Application to finite reductive groups}
Let $\mathbb F_q$ be a finite field and $K$ its algebraic closure. 
Let $G$ be a connected reductive group over $K$, with Frobenius map $F$ so that $G(\mathbb F_q) = G^F$ is a finite group of Lie type. Then, we have the following,
\begin{theorem}\label{CP-fgl}
Let $G$ be a connected reductive group defined over a finite field $\mathbb F_q$. Let us denote the $\mathbb F_q$ points of $G$ by $G(\mathbb F_q)=G^F$. Then, for large enough $q$,
$$|\C_d(G(\mathbb F_q))| \sim q^{n+ (d-1)\alpha}$$
up to a constant where $n$ is the dimension of $G$, and $\alpha$ is the maximal dimension of 
an Abelian subgroup. Hence up to a constant, $cp_d(G(\mathbb F_q))\sim 
\left(\frac{1}{q^{n-\alpha}}\right)^{d-1}$. 
\end{theorem}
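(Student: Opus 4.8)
The plan is to transport the finite‑group result, Proposition~\ref{prop-finite-group}, to the finite reductive group $G(\mathbb F_q)=G^F$ by tracking how the relevant quantities — the order $|G^F|$ and the size of a maximal Abelian subgroup of $G^F$ — grow as functions of $q$. The guiding principle is that for a connected reductive group over $\mathbb F_q$, Lang–Steinberg gives $|G^F|\sim q^{\dim G}=q^n$ up to a constant (more precisely $|G^F|=q^n\prod(1-q^{-d_i})$ for the degrees $d_i$, so $|G^F|=q^n(1+\mathcal O(q^{-1}))$). So the denominator $|G^F|^{d}$ in $cp_d$ contributes $q^{nd}$ up to a constant, and everything reduces to showing the numerator $|\C_d(G^F)|$ has size $\sim q^{n+(d-1)\alpha}$ up to a constant.

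First I would invoke Proposition~\ref{prop-finite-group} with $G$ replaced by the finite group $G^F$: there is a constant $m=m(q)$ with $|\C_d(G^F)|=m\,|G^F|\,a^{d-1}+\mathcal O(a^{d-2})$ for $d$ large, where $a=a(q)$ is the maximal size of an Abelian subgroup of $G^F$. The point is then to identify the $q$‑asymptotics of $a(q)$: I claim $a(q)\sim q^{\alpha}$ up to a constant, where $\alpha$ is the maximal dimension of an Abelian (algebraic) subgroup of $G$. One direction is easy: if $A\subset G$ is an Abelian subgroup of dimension $\alpha$, one can arrange (replacing $A$ by a conjugate, or by the identity component of an $F$‑stable maximal Abelian subgroup containing a maximal torus, using the results on maximal Abelian subgroups of finite groups of Lie type cited in the introduction, e.g.~\cite{Vd1,Vd2,Wo1,Wo2,Ba}) that $A$ is $F$‑stable and connected, whence $|A^F|\sim q^{\alpha}$, giving $a(q)\geq c\,q^{\alpha}$. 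The reverse bound $a(q)\leq C\,q^\alpha$ says that no Abelian subgroup of $G^F$ can be substantially larger than $q^{\alpha}$; this should follow because any Abelian subgroup of $G^F$ generates an Abelian algebraic subgroup of $G$ of dimension $\leq\alpha$, and an Abelian algebraic group of dimension $\leq\alpha$ over $\mathbb F_q$ has at most $\mathcal O(q^\alpha)$ rational points — here one must be a little careful about the finite part of the group scheme it generates (e.g.\ a large finite Abelian $p$‑subgroup inside a unipotent radical), but a dimension count on the Zariski closure together with the standard bound on $|V(\mathbb F_q)|$ for a variety of dimension $\alpha$ handles it, possibly at the cost of enlarging $C$ and requiring $q$ large.

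Having established $a(q)=q^\alpha(1+\mathcal O(q^{-1}))$ up to a constant and $|G^F|=q^n(1+\mathcal O(q^{-1}))$ up to a constant, I would substitute into the formula from Proposition~\ref{prop-finite-group}: $|\C_d(G^F)|=m|G^F|a^{d-1}+\mathcal O(a^{d-2})$, so the dominant term is $\asymp q^n\cdot q^{(d-1)\alpha}=q^{n+(d-1)\alpha}$, and the error term is of strictly smaller order in $q$ (for $d$ fixed and $q\to\infty$, since $\alpha\geq \mathrm{rank}(G)\geq 1$ so $q^{n+(d-2)\alpha}$ is genuinely smaller). This gives $|\C_d(G(\mathbb F_q))|\sim q^{n+(d-1)\alpha}$ up to a constant, and dividing by $|G^F|^d\sim q^{nd}$ yields $cp_d(G(\mathbb F_q))\sim (q^{\alpha-n})^{d-1}=\left(q^{-(n-\alpha)}\right)^{d-1}$ up to a constant, as claimed.

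The main obstacle I anticipate is the uniformity of the constant $m$ from Proposition~\ref{prop-finite-group} as $q$ varies: that proposition is stated for a \emph{fixed} finite group, and its constant $m$ a priori depends on the whole branching matrix $B_{G^F}$, which changes with $q$. What rescues the argument is that for large $q$ the combinatorial type of $B_{G^F}$ — its size $\beta$, which entries are zero, and the \emph{exponent} of $q$ in each nonzero entry (an entry being $\dim zcl-\dim cl$ computed in the algebraic group, hence $q$‑independent in leading order) — stabilizes, so $\mathbf 1\cdot B_{G^F}^{d-1}\cdot e_1$ becomes, for $q$ large, a fixed Laurent polynomial in $q$ whose leading term is $m\,q^{(d-1)\alpha}$ with $m$ independent of $q$; this is exactly the content one needs, and it parallels the reasoning in the proof of Theorem~\ref{theorem-cpd-alg-gps}. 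So in practice I would prove the theorem by combining the algebraic‑group branching matrix (Section~\ref{section-alg-gps}) evaluated at $\psi\rightsquigarrow q$ with the finite‑group counting of Theorem~\ref{branchingmatrix}, rather than by a black‑box application of Proposition~\ref{prop-finite-group}; the matrix $B_G$ controls the $q$‑exponents and Theorem~\ref{branchingmatrix} supplies the count, with the maximal‑Abelian‑dimension input $\alpha$ entering via Lemma~\ref{LemMax1-alg}.
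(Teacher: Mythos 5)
Your proposal follows the same high-level route as the paper's own proof, which simply cites Steinberg's order formula $|G^F| = q^n + \mathcal O(q^{n-1})$ (from~\cite{St2}) and then invokes Proposition~\ref{prop-finite-group} for the finite group $G^F$. You correctly flag, however, two points that the paper's terse argument leaves implicit: that the maximal Abelian subgroup size of $G^F$ satisfies $a(q) \asymp q^\alpha$, and that the constant $m$ in Proposition~\ref{prop-finite-group} a priori depends on the entire branching matrix $B_{G^F}$ and hence could vary with $q$, whereas the theorem's ``up to a constant'' requires it to stabilize. The second point is a genuine subtlety that a black-box application of Proposition~\ref{prop-finite-group} does not resolve, and your observation that the combinatorial shape of $B_{G^F}$ stabilizes for large $q$ (so that $\mathbf 1\cdot B_{G^F}^{d-1}\cdot e_1$ becomes a fixed polynomial in $q$ whose exponents match those of the algebraic branching matrix $B_G$ under $\psi\mapsto q$) is exactly the needed supplement; it is implicit in the paper but not stated. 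One caveat on your sketch of the upper bound $a(q)\leq C\,q^\alpha$: the Zariski closure of a finite Abelian subgroup of $G^F$ can have a number of components growing with $q$ (e.g.\ $\mu_m^{\mathrm{rank}\,G}$ inside a torus with $m\mid q-1$), so a naive dimension count on the closure does not by itself yield $\mathcal O(q^\alpha)$ rational points; in practice one really does need the explicit structure-theoretic bounds of~\cite{Vd1,Vd2,Wo1,Wo2,Ba}, which is presumably what the paper is tacitly relying on as well. Overall your proposal is correct and rather more careful than the proof the paper actually records.
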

\begin{proof}
From~\cite[Theorem 11.16]{St2}, it follows that $|G(\mathbb F_q)|=q^n+\mathcal O(q^{n-1})$. Thus, when $q$ is large enough (to ensure that only its power is dominating), we get the result from Proposition~\ref{prop-finite-group}.
\end{proof}

Maximal size/dimension of Abelian subgroups are well studied for finite classical groups and 
more generally for finite simple groups (see~\cite{Vd1, Vd2, Wo1, Wo2, Ba}). It turns out that for 
$G$, a finite simple group of Lie Type of large enough rank, an 
Abelian subgroup of maximal order is unipotent. If $G$ is not simple, then an Abelian subgroup of 
maximal order is the product of the centre of the group and an Abelian unipotent group in $G$ of 
maximal order. Now we look at some examples, mainly of finite groups of Lie type where maximal sized
Abelian subgroups are known, and give asymptotic value of commuting probabilities as $d$ gets 
large. In what follows, we take $q$ large enough and use the formula $\left(\frac{a}{|G|}\right)^{d-1}$ to compute the asymptotic value of $cp_d(G)$. 

\begin{example}\label{ExGL2}
For the group $GL_2(q)$, we have $|GL_2(q)| = (q^2-1)(q^2-q)$ and the maximal order of an Abelian 
subgroup is $q^2-1$ (given by an anisotropic torus). Then, 
$cp_d(GL_2(q))\sim \left(\displaystyle\frac{1}{q(q-1)}\right)^{d-1}$ up to a constant.

For $G = GL_3(q)$, we have the maximal size of an Abelian subgroup $a = q^3-1$ (again given by 
an  
anisotropic torus), and $|G| = (q^3-1)(q^3-q)(q^3-q^2)$. Then, 
$$cp_d(GL_3(q))\sim \left(\displaystyle\frac{1}{q^3(q^2-1)(q-1)}\right)^{d-1}.$$
In both of these cases, the maximal Abelian is obtained by centralizer of a regular semisimple 
element, that is, by a $1$-tuple.
\end{example}
\begin{example}
Consider the group $GL_{2l}(q)$ and $l\geq 2$. The following block diagonal matrices 
$$A=\left\{ \begin{pmatrix} \lambda I_l & X \\ &\lambda I_l \end{pmatrix}\mid X\in M_l(q), 
\lambda\in \mathbb F_q^* \right\}$$ 
give a maximal sized Abelian subgroup with order $(q-1)q^{l^2}= q^{l^2+1} +\mathcal O(q^{l^2})$. 
Notice 
that a maximal torus is of size $q^{2l}+\mathcal O(q^{2l-1})$ and  $A$ is bigger than this. 
Further, we note that $A$ can be obtained as a centralizer of commuting $3$-tuple as follows:
$$A= \mathcal Z_{GL_{2l}(q)}\left( \begin{pmatrix} I & I \\ & I \end{pmatrix}, \begin{pmatrix} I 
& \Lambda \\ & I \end{pmatrix}, \begin{pmatrix} I & N \\ & I \end{pmatrix}\right)$$
where $\Lambda = \begin{pmatrix} \lambda_1 & & \\ & \ddots 
&\\ & & \lambda_l \end{pmatrix}$ with all distinct entries, and $N = \begin{pmatrix} 0 & &\cdots 
&1\\ 1& 0 &\cdots& \\\vdots &\ddots&\ddots&\\ 0& & 1&0 \end{pmatrix}$.

Now, consider the group $GL_{2l+1}(q)$ for $l\geq 2$ and let $A$ be the following block diagonal 
matrices: $$A=\left\{ \begin{pmatrix} \lambda I_l & X \\ &\lambda I_{l+1} \end{pmatrix}\mid X\in 
M_{l\times (l+1)}(q), \lambda\in \mathbb F_q^* \right\}.$$ 
Then, $A$ is a maximal size Abelian subgroup with order $(q-1)q^{l(l+1)}= q^{l(l+1)+1} +\mathcal 
O(q^{l^2+l})$. Once again this can be obtained as a centralizer of commuting tuple.

Thus, for all $n \geq 4$, the maximal cardinality of any Abelian subgroup of $GL_n(q)$ is 
$q^{[n^2/4]}(q-1)$. Thus, up to a constant, the commuting probabilities 
$$cp_d(GL_{2l}(q))\sim \left(\frac{1}{q^{l(l-1)}\prod_{i=2}^{2l}(q^i-1)}\right)^{d-1}$$
and $$cp_d(GL_{2l+1}(\Fq))\sim \left(\frac{1}{q^{l^2}\prod_{i=2}^{2l+1}(q^i-1)}\right)^{d-1}.$$
\end{example}
  
\begin{example}\label{ExU2}
For $U_2(q)$, from~\cite[Proposition 3.3]{SS2} we have $cp_d(U_2(q)) = cp_d(GL_2(q))$ for all 
$d\geq 2$, so the asymptoticity is the same as in Example~\ref{ExGL2}.

For $U_3(q)$, the maximal size for an abelian subgroup is $(q+1)^3$. Thus, $cp_d(U_3(q))$ is 
asymptotic to $$\left(\frac{1}{q^3(q^2-q+1)(q-1)}\right)^{d-1}.$$

Now we take $U_n(q)$ for $n \geq 4$, its centre is of size $q+1$, and the maximal cardinality of 
its unipotent Abelian subgroup is $q^{[n^2/4]}$, like it is with $GL_n(q)$. Hence, the maximal 
Abelian cardinality is $q^{[n^2/4]}(q+1)$. Thus up to a constant, 
$$cp_d(U_{2l}(q))\sim \left(\frac{1}{q^{l(l-1)}\prod_{i=2}^{2l}(q^i-(-1)^i)}\right)^{d-1}$$
and
$$ cp_d(U_{2l+1}(q))\sim \left(\frac{1}{q^{l^2}\prod_{i=2}^{2l+1}(q^i-(-1)^i)}\right)^{d-1}.$$
We notice $q\leftrightarrow -q$, Ennola like duality, between the formula of $GL$ and $U$ for the 
asymptotic value.
\end{example}

\begin{example}\label{Symp2}
For $l\geq 1$ and $q$ odd, let us consider the symplectic group $Sp_{2l}(q)=\{g\in 
GL_{2l}(q) \mid \tr g \beta g =\beta\}$ where $\beta = \begin{pmatrix} & I_l \\-I_l & 
\end{pmatrix}$. 
For $l=1$ the group $Sp_2(q)\cong SL_2(q)$, and the maximal abelian subgroup is of size $2q$. So, 
for $d \geq 2$, the commuting probabilities $cp_d(Sp_2(q))$ is asymptotic (upto multiplication by 
some positive constant) to 
$$\left(\frac{2}{q^2-1}\right)^{d-1}.$$

Now, for large enough $q$,  
$$A=\left\{ \pm \begin{pmatrix} I_l & X \\ &  I_l \end{pmatrix}\mid \tr X = X \right\}$$ 
is a maximal size Abelian subgroup of $Sp_{2l}(q)$ of order $2 q^{\frac{l(l+1)}{2}}$. Hence, up to 
a 
constant,  
$$cp_d(Sp_{2l}(q))\sim  
\left(\frac{2q^{\frac{l(l+1)}{2}}}{q^{l^2}\prod_{i=1}^l(q^{2i}-1)}\right)^{d-1} = 
\left(\frac{2}{q^{\frac{l(l-1)}{2}}\prod_{i=1}^l(q^{2i}-1)}\right)^{d-1}.$$
\end{example}

\begin{example}
Let us consider the orthogonal group $O_{2l}(q)=\{g\in GL_{2l}(q) \mid \tr g \beta g =\beta\}$ 
where $\beta = \begin{pmatrix} & I_l \\ I_l & \end{pmatrix}$ and $q$ odd. Then,  
$$A=\left\{ \pm \begin{pmatrix} I_l & X \\ &  I_l \end{pmatrix}\mid \tr X = - X \right\}$$ 
is a maximal size Abelian subgroup with order $2 q^{\frac{l(l-1)}{2}}$. Hence up to a constant,  
$$cp_d(O_{2l}(q))\sim 
\left(\frac{2q^{\frac{l(l-1)}{2}}}{2q^{l(l-1)}\prod_{i=1}^l(q^{2i}-1)}\right)^{d-1 } = 
\left(\frac{1}{q^{\frac{l(l-1)}{2}}\prod_{i=1}^l(q^{2i}-1)}\right)^{d-1}.$$
\end{example}



\end{document}